\numberwithin{equation}{section}
\newtheorem{theorem}{Theorem}[section]
\newtheorem{lemma}{Lemma}[section]
\theoremstyle{remark}
\newtheorem{remark}{Remark}[section]
\providecommand{\abs}[1]{\lvert #1\rvert}
\providecommand{\norm}[1]{\lVert #1\rVert}
\newcommand{\nc}{\newcommand}
\nc{\vb}{\mathbf{v}}
\nc{\bx}{\mathbf{x}}
\nc{\by}{\mathbf{y}}
\nc{\bz}{\mathbf{z}}
\nc{\bu}{\mathbf{u}}
\nc{\bv}{\mathbf{v}}
\nc{\ba}{\mathbf{a}}
\nc{\bs}{\mathbf{s}}
\nc{\bq}{\mathbf{q}}
\nc{\bd}{\mathbf{d}}
\nc{\bb}{\mathbf{b}}
\nc{\bc}{\mathbf{c}}
\nc{\bi}{\mathbf{i}}
\nc{\bfr}{\mathbf{r}}
\nc{\bA}{\mathbf{A}}
\nc{\R}{\mathbb R}
\nc{\N}{\mathbb N}
\nc{\C}{\mathbb C}
\nc{\D}{\mathbb D}
\nc{\Z}{\mathbb Z}
\nc{\F}{\mathbf F}
\nc{\bbS}{\mathbb S}
\nc{\B}{\cal B}
\nc{\br}{\bigr}
\nc{\bl}{\bigl}
\nc{\Bl}{\Bigl}
\nc{\Br}{\Bigr}
\nc{\ind}{\mathbf{1}}
\nc{\bP}{\mathbf{P}}
\DeclareMathOperator*{\nt}{int}  
\DeclareMathOperator*{\cl}{cl}  
\title{Large deviation limits of invariant measures}
\author{Anatolii A. Puhalskii \footnote{Email: puhalski@iitp.ru}\\
 Institute for Problems in Information
Transmission}
\begin{document}

\maketitle
\begin{flushright}
\mbox{To the memory of my mother}
\end{flushright}
\sloppy
\vspace{1.cm}



\begin{abstract}
This paper is concerned with the general theme of
relating the Large Deviation Principle (LDP) for the invariant
measures of stochastic processes to the associated trajectorial
LDP. It is shown  that if  the trajectorial deviation function
has certain structure and if  the invariant measures are
 exponentially tight,
then the LDP for the invariant measures is implied by the trajectorial LDP,  no
other properties of the stochastic processes in question being material. 
As an application,
we obtain an LDP for the stationary distributions of jump diffusions. 
Methods of large deviation convergence and idempotent probability play
an integral part.
\end{abstract}
\begin{flushleft}
\mbox{
Key words: large deviations ; invariant measures }
\end{flushleft}

\section{Introduction}
\label{sec:introduction}

Let $X^n=(X^n_t\,,t\ge0)\,,n\in\mathbb N\,,$ be a sequence of 
rightcontinuous  stochastic processes with lefthand limits defined on a
complete probability space $(\Omega,\mathcal{F},\mathbf P)$ and taking
 values in a
 metric space $\mathbb S$\,. Suppose that the $X^n$ satisfy a trajectorial
 LDP in the Skorohod space 
$\mathbb D(\R_+,\mathbb S)$ with a (tight) deviation function
(also referred to as a rate function or an action functional)   $ I(X)\,,X\in\mathbb D(\R_+,\mathbb S)$\,. Let $ P^n$ represent  time-invariant
   distributions of the $X^n$ so that
   \begin{equation}
     \label{eq:1}
     \mathbf P(X_t^n\in \Gamma)=P^n(\Gamma)\,
   \end{equation}
for all Borel sets $\Gamma\subset \mathbb S$ and all $t\in\R_+$\,.
One seeks to obtain an LDP for the $P^n$ from that for the $X^n$\,.

A basic example is a diffusion in a finite--dimensional
Euclidean space $\R^d$ with small noise:
\begin{equation*}
  dX^n_t=b(X^n_t)\,dt+\frac{1}{\sqrt{n}}\,dW_t\,,X^n_0=x\,,
\end{equation*}
where $W_t$ represents a standard Wiener process.
Under regularity assumptions,
 the processes $X^n=(X^n_t\,,t\ge0)$  obey an LDP in $\mathbb D(\R_+,\R^d)$\,, 
as $n\to\infty$\,, for rate $n$ with
deviation function $ I(X)=(1/2)\int_0^\infty\abs{\dot 
  X_t-b(X_t)}^2\,dt $\,, provided $X=(X_t\,,t\ge0)$ is an absolutely continuous
function and $X_0=x$\,, and $ I(X)=\infty$\,, otherwise.
Let us assume, in addition, that the $X^n$ admit unique invariant
measures.
If the differential equation 
\begin{equation}
  \label{eq:15}
\dot X_t=b(X_t)
\end{equation}
 has a unique equilibrium which is asymptotically
stable, then, in some generality,
 the invariant measures satisfy 
 an LDP in $\R^d$ with the deviation
function that is the quasipotential $V(x)=\inf_{t\ge0}
 \inf_{X:\,X_0=O\,,X_t=x} I(X)$\,, $O$ representing the
 asymptotically stable equilibrium,  see Freidlin and Wentzell
\cite{wf2}. 
 Things are drastically different where the differential equation
\eqref{eq:15} has multiple attractors. A quasipotential is no longer
the correct answer.
In a remarkable accomplishment,
 Freidlin and Wentzell 
 \cite{wf2} identified the deviation function for that setup.
 Their ingenious analysis relied heavily
on the strong Markov property and involved an intricate study of attainment
times. 

The purpose of this contribution is to formulate a general
framework that should enable one to derive this sort of result. We  show
that once a trajectorial LDP has been established, an
LDP for an exponentially tight collection of
 invariant measures   can be inferred from 
the properties of the trajectorial deviation function without invoking
 the particulars of the stochastic processes in question. 
The compactness approach developed
 in Puhalskii \cite{Puh97,Puh01} is used.
  Rather than  checking the lower and upper bounds in the definition of
  the LDP,
one   proves
exponential tightness, first, and, then, capitalising on relative
compactness delivered by the exponential tightness, attempts to
 identify the deviation function through  equations
that arise as large deviation limits of equations
satisfied by  the original 
stochastic processes.  
For instance, in the large deviation limit,
 exponential martingales turn  into
''maxingales'' and, in analogy with a  martingale problem, one may look for a deviation function that renders
certain functions of trajectories maxingales. 
Since  large deviation limits take one into
 the rhelm of tropical (or idempotent) mathematics, with
the field of reals being replaced with a tropical semifield so that
subtraction is no longer available, the limit equations may be less
informative and satisfied by a variety of deviation functions.  
 The challenge is to come up with 
equations  that specify the deviation function uniquely.
Vis a vis invariant measures, this approach was applied in
Puhalskii \cite{Puh03,Puh19a}. There, a large deviation limit was taken in
the definition of an invariant measure. 
In the limit, an invariant measure turns into
an invariant deviation function. (As a matter of fact, we prefer
dealing with negative exponentials of deviation functions which are
 akin to probabilities, with  maxima being substitutes for sums, and which we
dub ''deviabilities''.) In Puhalskii \cite{Puh03,Puh19a}, we were unable to
tackle the case of a multitude of equilibria, the ''naive'' limit
being too crude.
The insight of this paper is to pass to 
 large deviation limits  in balance  equations 
 for probability fluxes  across
  cuts in the state space.
 In the limit,  max balance equations are obtained, according 
to which
 max fluxes across
cuts balance.
Those equations are shown to identify the limit deviability.
As an application,  an LDP for the invariant
measures of jump diffusions
 is established. Besides, Freidlin and Wentzell's \cite{wf2} prescription for
calculating the deviation function is extended 
to the case where the limit
differential equation has  infinitely many equilibria. 

This paper is structured as follows.
In Section \ref{sec:setup-main-results}, the
 approach is outlined and the main results are stated.
Section \ref{sec:large-devi-conv} contains
the derivation of max balance equations and a proof that they identify
the limiting deviability.  Section \ref{sec:diff-with-jumps}
tackles the LDP for the invariant
measures of  jump diffusions.
 \section{The LDP for invariant measures}
\label{sec:setup-main-results}


Let us fix terminology and review necessary facts and definitions,
see, e.g., Puhalskii \cite{Puh01}.
Let $\mathbb E$ represent a metric space. 
 Let $\mathcal{P}(\mathbb{E})$ denote the
power set of $\mathbb{E}$. 
We say that  set function $\mathbf\Pi:\, \mathcal{P}(\mathbb{E})\to[0,1]$
is a deviability if 
$\mathbf\Pi(E)=\sup_{x\in E}\mathbf\Pi(\{x\}),\,E\subset \mathbb{E},$
where the function $\mathbf\Pi(x)=\mathbf\Pi(\{x\})$ is such that 
$\sup_{x\in \mathbb{E}}\mathbf\Pi(x)=1$ and the sets
$\{x\in \mathbb{E}:\,\mathbf\Pi(x)\ge \gamma\}$ are compact for all $\gamma \in(0,1]$. 
(One can also refer to $\mathbf\Pi$ as a maxi measure.
Suprema over null sets are defined to
equal 0.)
The function $\mathbf\Pi(x)$ is called a deviability density.
A deviability  is a tight set function in the sense that 
$\inf_{K\in\mathcal{K}(\mathbb E)}\mathbf\Pi(\mathbb S\setminus K)=0$\,,
 where $\mathcal{K}(\mathbb E)$ stands for the collection of compact
subsets of $\mathbb E$\,.
If $\Xi$ is a directed set and $F_\xi\,, \xi\in \Xi\,,$ is a net of closed
subsets of $\mathbb E$ that is nonincreasing with respect to the
partial order on $\Xi$ by inclusion, then $\mathbf\Pi(\cap_{\xi\in
  \Xi}F_\xi)=
\lim_{\xi\in \Xi}\mathbf\Pi(F_\xi)$\,.
  The continuous images of deviabilities are deviabilities, i.e., 
if $f:\,\mathbb E\to\mathbb E'$ is continuous, with $\mathbb E'$ being
a metric space, then $\mathbf\Pi\circ f^{-1}$ defined by
$\mathbf\Pi\circ f^{-1} (E')=\mathbf\Pi( f^{-1}(E'))$ is a deviability on $\mathbb
E'$\,, where $E'\subset \mathbb E'$\,.

We say that  a sequence $Q_n$
of probability measures on the Borel $\sigma$--algebra of $\mathbb E$
  Large
Deviation (LD)  converges at rate $n$ to  deviability $\mathbf\Pi$  if
for every bounded continuous non-negative function $f$ on $\mathbb{E}$\,,
\[
\lim_{n\to\infty}\left(\int_{ \mathbb{E}}f(x)^{n}\,Q_n(dx)\right)^{1/n}=
\sup_{x\in \mathbb{E}}f(x)\mathbf\Pi(x).
\]
Equivalently, one may require that 
$\liminf_{n\to\infty}Q_n(\Gamma)^{1/n}\ge
\mathbf\Pi(\nt \Gamma)$ 
and $\limsup_{n\to\infty}Q_n(\Gamma)^{1/n}\le
\mathbf\Pi(\cl \Gamma)$ for every Borel set $\Gamma$\,.
(As is customary,  $\text{int}$ is used to denote the interior of
a set and  $\text{cl}$ is used  to denote the closure of a set.)
We say that the sequence $Q_n$ is exponentially tight of order $n$ if 
$\inf_{K\in\mathcal{K}(\mathbb E)}\limsup_{n\to\infty}Q_n(\mathbb E\setminus
K)^{1/n}=0$\,. If the sequence $Q_n$ is exponentially tight of order
$n$\,, then there exists  subsequence $Q_{n'}$ that LD
converges at rate $n'$ to a deviability.
 Any such deviability will be
referred to as a Large Deviation (LD) limit point of the $Q_n$\,.
It is immediate that $\mathbf\Pi$
is a deviability if and only if $I(x)=-\ln \mathbf\Pi(x)$ is a tight  deviation
function, i.e., 
 the sets $\{x\in\mathbb E:\,I(x)\le \gamma\}$ are compact for all
$\gamma\ge0$ and $\inf_{x\in\mathbb E}I(x)=0$\,, and that the $Q_n$ LD
converge to $\mathbf\Pi$ if and only if they
 obey the LDP for rate $n$ with deviation function
$I$\,, i.e.,  $\liminf_{n\to\infty}(1/n)\,\ln Q_n(G)\ge -\inf_{x\in
  G}I(x)$\,, for all open sets $G$\,, and
$\limsup_{n\to\infty}(1/n)\,\ln Q_n(F)\le -\inf_{x\in
  F}I(x)$\,, for all closed sets $F$\,.

We return to the setup of processes $X^n$ with trajectories in the Skorohod
space $\mathbb D(\R_+,\mathbb S)$ and invariant distributions $P^n$ and state our hypotheses.
\subparagraph{2.1}
There exist 
versions of regular conditional distributions $\mathbf P(X^n\in
\Gamma|X_0^n=x)$\,, where $\Gamma$ represents a Borel subset of 
$\mathbb
D(\R_+,\mathbb S)$ and $x\in\mathbb S$\,, such that whenever 
$x^n\to x$\,, as $n\to\infty$\,, with $x^n$ belonging to the support
of $P^n$\,,  the distributions
 $\mathbf P(X^n\in\Gamma|X_0^n=x^n)$\,, considered as probability measures on $\mathbb
D(\R_+,\mathbb S)$\,, LD converge at rate $n$ to deviability $\mathbf\Pi_x$
on $\mathbb
D(\R_+,\mathbb S)$ such that $ \mathbf\Pi_x(X)=0$
unless  $X=(X_t,\,t\in\R_+)\in\D(\R_+,\mathbb S)$ is a continuous function. (It is immediate that $
\mathbf\Pi_x(X)=0$ unless $X_0=x$\,.)

\subparagraph{2.2}
The function  $\mathbf\Pi_x(X)$ is upper semicontinuous in
$(x,X)$
and the set $\cup_{x\in K}\{X:\,\mathbf\Pi_x(X)\ge\gamma \}$ is compact, for
all $\gamma \in(0,1]$ and  $K\in \mathcal{K}(\mathbb S)$\,.
(The former condition is fulfilled if $\mathbf\Pi_x(X)=\Pi(X)$ provided $X_0=x$ and
$\mathbf\Pi_x(X)=0$\,, otherwise, where $\Pi(X)$ is an upper semicontinuous
 function.)

\subparagraph{2.3}
For all $X\in \mathbb D(\R_+,\mathbb S)$\,,
\begin{equation}
  \label{eq:3}
   \mathbf\Pi_{x}(X)= \mathbf\Pi_{x}(\pi_s^{-1}(\pi_sX)) \mathbf\Pi_{X_s}(\theta_sX)\,,
\end{equation}
where $\pi_sX=(X_t\,,t\in[0,s])$ and 
$\theta_sX=(X_{s+t}\,, t\ge0)$\,.
(One can see that it is a  version of the Markov property.)

Let
\begin{align}
  \label{eq:5}
  \Pi_{x,t}(y)=\sup_{X\in \D(\R_+,\mathbb S)\,:\,X_t=y} \mathbf\Pi_x(X)
  \intertext{ and }
\Pi_{x,t}(\Gamma)=\sup_{y\in\Gamma}\Pi_{x,t}(y)\,,
\Gamma\subset\mathbb S\,.\notag
\end{align}
It is noteworthy that $\Pi_{x,t}(y)$ is upper semicontinuous in
$(x,t,y)$ and is a deviability density in $y$ (as a continuous image
of $\mathbf\Pi_x$)\,. Let
 $d(\cdot,\cdot)$ represent the metric on $\mathbb S$\,.

\subparagraph{2.4}

There exists   set
$A
\subset \mathbb S$\,,
which is locally finite in the sense that
every compact subset of $\mathbb S$ contains at most finitely many of
the elements of $A$\,, such that the following
properties hold:
\begin{trivlist}
\item[(1)] if 
 $ \mathbf\Pi_x(X)=1$\,, then   $\inf_{t\ge0}d(X_t, A)=0$\,,
\item[(2)]
if $X_t= a$\,, for all $t\ge0$\,, where $a\in A$\,, then $\mathbf\Pi_a(X)=1$\,,
\item[(3)] if $a,a'\in A$\,, then $\Pi_{a,t}(a')>0$\,, for some $t>0$\,,
\item[(4)] for any $\epsilon>0$\,, there exists $\delta>0$ such
  that if $d(x,A)<\delta$\,, then 
 $\Pi_{x,s_0}(a)>1-\epsilon$
and $\Pi_{a,s_1}(x)>1-\epsilon$\,, for some  $s_0>0$\,,
  $s_1>0$\,, and $a\in A$\,,
\item[(5)] for any  $x\in \mathbb S$ and $\epsilon>0$\,, there exist $\delta>0$\,,
  $t_0$ and $t_1$
  such that 
$\Pi_{x,t_0}(x')>1-\epsilon$
and $\Pi_{x',t_1}(x)>1-\epsilon$ whenever $d(x,x')<\delta$\,. 
\end{trivlist}
(The set $A$ plays the role of the set of attractors of \eqref{eq:15}.)

\subparagraph{2.5}

The net $(\Pi_{x,t}(\Gamma)\,,\Gamma\subset\mathbb S)\,, t\ge0,$ is 
tight uniformly over $x$ from compact sets. More explicitly,
for any compact $K_1\subset \mathbb S$ and $\epsilon>0$\,, there
exists compact $K_2\subset \mathbb S$ such that
$  \limsup_{t\to\infty}\sup_{x\in K_1}\Pi_{x,t}(\mathbb S\setminus K_2)<\epsilon\,.
$

\vspace{.5cm}
We prove in Lemma \ref{le:attraction} that under these hypotheses,
  there exist the limits
\begin{equation*}
  \Pi(x,y)=\lim_{t\to\infty}\Pi_{x,t}(y)\,,
\end{equation*}
 the function $\Pi(x,y)$ is continuous in $(x,y)$ and
  is upper compact in $y$ uniformly over $x$
  from compact sets, the latter property  meaning
  that, for any $\gamma>0$ and compact $K$\,,
the set $\cup_{x\in K}\{y:\,\Pi(x,y)\ge \gamma\}$ is compact. 
\footnote{ A function $f$ is said to be upper compact if the sets
$\{x:\,f(x)\ge \lambda\}$ are compact. This definition is modelled on
 the definition of
a lower semicompact function   as a function
$f$ such that 
the sets $\{x:\,f(x)\le\lambda\}$ are relatively compact,
see, e.g.,  Aubin \cite{Aub93}. }
For sets $\Gamma\subset \mathbb S$ and $\Gamma'\subset \mathbb S$\,, we let
\begin{equation*}
  \Pi(\Gamma,\Gamma')=\sup_{\substack{x\in \Gamma,\\y\in \Gamma'}}\Pi(x,y)\,.
\end{equation*}
(The following notational convention is adhered
to.
If either $\Gamma$ or $\Gamma'$ is a one-element set, then we identify such a set with its only element, e.g., we define
$\Pi(x,\Gamma')=\Pi(\{x\},\Gamma')$\,.) 
It is noteworthy that
 $\Pi(a,a)=1$\,, for $a\in A$\,.

For sets $\Gamma$ and $\Gamma'$ and deviability $\mathbf\Pi$\,, the max flux from $\Gamma$ to
$\Gamma'$ relative to $\mathbf\Pi$ is defined by
\begin{equation*}
  \Phi_{\mathbf\Pi}(\Gamma,\Gamma')=\sup_{x\in \Gamma}\mathbf\Pi(x)\Pi(x,\Gamma')\,.
\end{equation*}

Max balance is said to hold for $\mathbf\Pi$  if
\begin{equation}
  \label{eq:56}
  \Phi_{\mathbf\Pi}(A',A'')=\Phi_{\mathbf\Pi}(A'',A')\,,\text{ for any partition
  $\{A',A''\}$ of $A$}.
\end{equation}
\begin{theorem}
  \label{the:vf}
Suppose that the sequence $P^n$ of time invariant distributions of
the processes $X^n$ is exponentially tight of order $n$\,.
Let $\mathbf\Pi$ represent an LD limit point of the $P^n$\,.
Let conditions  $\mathbf{2.1}$ -- $\mathbf{2.5}$ hold.
Then,
\begin{enumerate}
\item 
 $\mathbf\Pi$ satisfies the max balance  in \eqref{eq:56}
and the normalisation condition 
that $\mathbf \Pi(A)=1\,,$ which 
 requirements uniquely specify  the restriction of $\mathbf\Pi$ to $A$\,,
\item  For all $x\in\mathbb S$\,,
\begin{equation*}
   \mathbf\Pi(x)=\Phi_{\mathbf\Pi}(A,x)\,,
\end{equation*}
\item The $P^n$ LD converge to $\mathbf\Pi$\,.\end{enumerate}
 \end{theorem}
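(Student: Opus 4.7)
By exponential tightness, fix an LD limit point $\Pi$ of $P^n$ and, along a subsequence which I will not relabel, assume $P^n$ LD-converges to $\Pi$. My first goal is the maxitive invariance
\[
\Pi(y)=\sup_{x\in\mathbb{S}}\Pi(x)\,\Pi_{x,t}(y),\quad y\in\mathbb{S},\; t\ge0.
\]
This would be obtained by writing $P^n(\Gamma)=\int\mathbf P(X^n_t\in\Gamma\mid X^n_0=x)\,P^n(dx)$ and using hypotheses $\mathbf{2.1}$ and $\mathbf{2.2}$ to lift the continuous LD convergence of the conditionals, combined with LD convergence of $P^n$, to a joint LDP for $(X^n_0,X^n_t)$ whose limit deviability is $(x,y)\mapsto\Pi(x)\Pi_{x,t}(y)$; marginalising in $x$ then produces the displayed formula. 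Letting $t\to\infty$, the lemma on existence of $\Pi(x,y)=\lim_t\Pi_{x,t}(y)$ together with hypothesis $\mathbf{2.5}$ for uniform tightness in $t$ yields $\Pi(y)=\sup_x\Pi(x)\Pi(x,y)$. For item 2, I would localise this sup to $A$: by hypothesis $\mathbf{2.4}(1)$, combined with \eqref{eq:3}, every $X$ with $\Pi_x(X)>0$ must visit a neighbourhood of some $a\in A$, and hypothesis $\mathbf{2.4}(4)$ lets me absorb that neighbourhood into $a$ itself with an arbitrarily small loss, so that $\Pi(y)=\sup_{a\in A}\Pi(a)\Pi(a,y)=\Phi_\Pi(A,y)$. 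The normalisation $\Pi(A)=1$ is then immediate from $1=\sup_y\Pi(y)\le\sup_a\Pi(a)\cdot 1=\Pi(A)\le 1$.

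For the max balance in item 1, I would exploit the probabilistic flux balance: for every Borel $U$ and $t\ge0$, invariance of $P^n$ yields
\[
\mathbf P(X^n_0\in U,\,X^n_t\in U^c)=\mathbf P(X^n_0\in U^c,\,X^n_t\in U),
\]
since both sides equal $P^n(U)-\mathbf P(X^n_0\in U,\,X^n_t\in U)$. Passing to the LD limit on continuity sets and then taking $t\to\infty$ gives
\[
\sup_{x\in U,\,y\in U^c}\Pi(x)\Pi(x,y)=\sup_{x\in U^c,\,y\in U}\Pi(x)\Pi(x,y).
\]
Given a partition $\{A',A''\}$ of $A$, the local finiteness of $A$ allows one to choose open $U$ with $U\cap A=A'$ and $A''\subset U^c$. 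Shrinking $U$ toward $A'$ and using hypotheses $\mathbf{2.4}(4)$--$(5)$ to approximate $\Pi(x)\approx\Pi(a)$ and $\Pi(x,y)\approx\Pi(a,y)$ uniformly whenever $x$ is close to $a\in A$, the neighbourhood equation collapses to $\Phi_\Pi(A',A'')=\Phi_\Pi(A'',A')$.

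For the uniqueness clause of item 1 and for item 3, max balance combined with $\Pi(A)=1$ and the connectivity condition $\mathbf{2.4}(3)$ should pin down $\Pi|_A$ via a max-plus (tropical) analogue of the Kirchhoff / matrix-tree formula, in the spirit of the Freidlin--Wentzell spanning-tree representation of the quasipotential. Granting this, item 3 follows at once: every LD limit point shares the same restriction to $A$, hence by item 2 agrees on all of $\mathbb{S}$, and exponential tightness forces the full sequence $P^n$ to LD-converge to $\Pi$. I expect the main obstacle to be precisely this uniqueness step, together with the collapse from neighbourhoods of $A$ to $A$ itself in the derivation of max balance: the tropical tree formula is delicate when $A$ is only locally finite rather than finite, and controlling $\sup_{x\in U}\Pi(x)\Pi(x,\cdot)$ uniformly as $U\downarrow A'$ requires a careful interplay of $\mathbf{2.4}(4)$--$(5)$ with the uniform upper compactness of $\Pi(x,\cdot)$ supplied by the lemma.
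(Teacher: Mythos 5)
Your outline has the right skeleton, and several of the components match the paper: the maxitive invariance $\Pi(y)=\sup_{x}\Pi(x)\Pi_{x,t}(y)$ obtained from LD convergence of the conditional laws together with LD convergence of $P^n$ (this is the paper's \eqref{eq:21}), the localisation of the supremum to $A$ giving item 2 and $\Pi(A)=1$ (the content of Lemma~\ref{le:rep_1}), the flux balance identity \eqref{eq:16} as the starting point for max balance, and the observation that items 1 and 2 together pin down every LD limit point and hence force full LD convergence.

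The serious gap is in the passage from the flux balance across a neighbourhood $U$ of $A'$ to the statement $\Phi_\Pi(A',A'')=\Phi_\Pi(A'',A')$. Shrinking $U$ toward $A'$ does \emph{not} collapse the flux $\Phi_\Pi(U,U^c)=\sup_{x\in U}\Pi(x)\Pi(x,U^c)$ to $\Phi_\Pi(A',A'')$. As $U\downarrow A'$ the complement $U^c$ grows up to $\mathbb S\setminus A'$, and by continuity of $\Pi(a',\cdot)$ one has $\Pi(a',\mathbb S\setminus A')=1$ for $a'\in A'$, so in fact $\Phi_\Pi(U,U^c)\to\Pi(A')$, which is in general strictly larger than $\Phi_\Pi(A',A'')$. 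The culprit is the contribution of trajectories that leave the small neighbourhood and immediately return, which cost nothing in the LD limit. The paper avoids this by working with a \emph{large} cut set, $F=\{x:\Phi_\Pi(A',x)\ge\Phi_\Pi(A'',x)\}$, the basin where $A'$ dominates, and then proving via Lemmas~\ref{le:attractor} and \ref{le:ravno} — which rely crucially on the structural result in part~4 of Lemma~\ref{le:attraction} about optimal trajectories hitting $\partial F$ with a unit-deviability continuation — that $\Phi_\Pi(F,F^c)=\Phi_\Pi(A',A'')$ and $\Phi_\Pi(F^c,F)=\Phi_\Pi(A'',A')$. This identification, together with the decomposition \eqref{eq:28} that splits $\Phi_\Pi(\cl\Gamma,\cl\Gamma^c)$ into an $A'\to\cl\Gamma^c$ piece and an $A''\to A''$ round-trip piece, is the heart of the argument and is missing from your proposal. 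Two smaller points: the LD limit of the flux balance only gives the one-sided inequality of Lemma~\ref{the:maxflow}, which is upgraded to a two-sided equality via the continuity of $\Pi(\cdot)$ and $\Pi(\cdot,\cdot)$ (Remark~\ref{re:cont}), a step you should make explicit; and for uniqueness the paper does not go through the tropical matrix-tree formula (that is used only to exhibit the solution, Lemma~\ref{le:reshenie}), but instead gives a short direct contradiction argument in the style of Schneider and Schneider by considering $A'=\{a:\Pi(a)>\Pi'(a)\}$, which is simpler than what you propose.
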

 \begin{remark}
An outline of the proof of \eqref{eq:56} is as follows. As $P^n$ is
invariant, balancing probability fluxes between $\Gamma\subset\mathbb
S$ 
and $\Gamma^c=\mathbb S\setminus \Gamma$  yields the equation
\begin{equation*}
        \int_\Gamma\mathbf P(X^n_t\in \Gamma^c|X^n_0=x)\,P^n(dx)=
\int_{\Gamma^c}\mathbf P(X^n_t\in \Gamma|X^n_0=x)\,P^n(dx)\,,
\end{equation*}
with details being provided in the proof of Lemma \ref{the:maxflow}.
By the LD convergence of the 
 $((\mathbf P(X^n\in\Delta|X_0^n=x^n),\,\Delta\subset \D(\R_+,\mathbb S))$ 
to $\mathbf\Pi_{x}$ in {\bf 2.1}, for ''nice'' sets
$\Gamma\subset\mathbb S$\,,
\begin{align*}
 \lim_{n\to\infty}\mathbf P(X^n_t\in \Gamma|X^n_0=x^n)^{1/n}=
\Pi_{x,t}(\Gamma)   \,.
\end{align*}
Also LD convergence of the $P^n$ to $\mathbf\Pi$ implies that
\begin{equation*}
  \lim_{n\to\infty}\bl(\int_{\mathbb S}f_n(x)^nP^n(dx)\br)^{1/n}
=\sup_{x\in\mathbb S}f(x)\mathbf\Pi(x)\,,
\end{equation*}
provided $f_n(x^n)\to f(x)$ whenever $x^n\to x$\,.
Hence,  with $f_n(x)=\mathbf P(X^n_t\in\Gamma|X^n_0=x)^{1/n}\ind_{ \Gamma^c}(x)$ and $f(x)=
\Pi_{x,t}(\Gamma)\ind_{\Gamma^c}(x)$\,, assuming $\Gamma^c$ is ''nice'',
\begin{equation*}
  \lim_{n\to\infty}\bl(\int_{\Gamma^c}\mathbf P(X^n_t\in
  \Gamma|X^n_0=x)\,P^n(dx)\br)^{1/n}=
\sup_{x\in \Gamma^c}\sup_{y\in \Gamma}\Pi_{x,t}(y)\mathbf\Pi(x)\,.
\end{equation*}
Similarly, once again assuming $\Gamma^c$ and $\Gamma$ ''nice'',
\begin{equation*}
    \lim_{n\to\infty}\bl(\int_{\Gamma}\mathbf P(X^n_t\in
  \Gamma^c|X^n_0=x)\,P^n(dx)\br)^{1/n}=
\sup_{x\in \Gamma}\sup_{y\in \Gamma^c}\Pi_{x,t}(y)\mathbf\Pi(x)\,.
\end{equation*}
Putting everything together and letting $t\to\infty$
 yields 
$   \Phi_{\mathbf\Pi}(\Gamma,\Gamma^c)=\Phi_{\mathbf\Pi}(\Gamma^c,\Gamma)\,.
 $
Furthermore, it is shown that, for
 $\Gamma=\{x\in\mathbb S:\,
\Phi_{\mathbf\Pi}(A',x)\ge\Phi_{\mathbf\Pi}(A'',x)\}$\,, 
in some generality, $\Phi_{\mathbf\Pi}(\Gamma,\Gamma^c)=\Phi_{\mathbf\Pi}(A',A'')$
and $\Phi_{\mathbf\Pi}(\Gamma^c,\Gamma)=\Phi_{\mathbf\Pi}(A'',A')$\,,
implying \eqref{eq:56}. \end{remark}
\section{LD convergence and max balance}
\label{sec:large-devi-conv}
In this section, we prove  the claims of Section  \ref{sec:setup-main-results}.
 Freidlin and Wentzell's 
solution to the max balance equations is recapitulated and extended.

Let us note that \eqref{eq:3} entails   the semigroup property that
\begin{equation}
  \label{eq:26}
      \Pi_{x,s+t}(y)=\sup_{z\in\mathbb S}\Pi_{x,s}(z)\Pi_{z,t}(y)\,.
  \end{equation}
Indeed, by \eqref{eq:3} and \eqref{eq:5},
\begin{multline*}
  \Pi_{x,s+t}(y)=\sup_{X:\,X_{s+t}=y}\mathbf\Pi_x(X)=
\sup_{z\in\mathbb S}\sup_{\substack{X:\,X_0=x,\\X_s=z,\,(\theta_sX)_{t}=y}}\;
\sup_{X'\in\pi_s^{-1}(\pi_sX)}\mathbf\Pi_x(X')\mathbf\Pi_z(\theta_sX)\\=
\sup_{z\in\mathbb S}\sup_{X':\,X'_0=x,\,X'_s=z}
\mathbf\Pi_x(X')\sup_{\theta_sX:\,(\theta_sX)_t=y}\mathbf\Pi_z(\theta_sX)
=\sup_{z\in\mathbb S}\Pi_{x,s}(z)\Pi_{z,t}(y)\,.
\end{multline*}
By  \eqref{eq:26},
\begin{equation}
  \label{eq:71}
  \Pi_{x,s+t}(y)\ge \Pi_{x,s}(z)\Pi_{z,t}(y)\,.
\end{equation}
In addition, if $\Pi_{x,s+t}(y)=\mathbf\Pi_x(X)$\,, then
\begin{equation}
  \label{eq:70}
    \Pi_{x,s+t}(y)=\Pi_{x,s}(X_s)\Pi_{X_s,t}(y)\,.
\end{equation}
Let 
\begin{equation*}
  \tilde\Pi(x,y)=\sup_{t\ge 0}\Pi_{x,t}(y)\,.
\end{equation*}
It is noteworthy that
\begin{equation*}
  \tilde\Pi(x,y)\ge \tilde\Pi(x,z)\tilde\Pi(z,y)\,.
\end{equation*}
\begin{lemma}
  \label{le:attraction}
\begin{enumerate}
  \item   
For any $\eta>0$\,, $\delta>0$ and compact $K\subset\mathbb S$\,,
there exists $T>0$ such that,
if  $\mathbf\Pi_x(X)>\eta$\,,  then
\begin{equation*}
  \inf_{0\le t\le T}d(X_t,A)<\delta
\,,
\end{equation*}
where $x\in K$\,.
\item
Let $x^t\to x$ and $y^t\to y$\,, as $t\to\infty$\,. Then, there exists
the limit
\begin{equation*}
 \Pi(x,y)=\lim_{t\to\infty}\Pi_{x^t,t}(y^t)\,.
\end{equation*}
 In addition,
\begin{equation}
  \label{eq:68}
        \Pi(x,y)=\sup_{z\in\mathbb S}\Pi(x,z)\Pi(z,y)\,.
\end{equation}
\item The function $\Pi(x,y)$ is continuous in $(x,y)$ and is 
  upper compact in $y$ uniformly over compact sets of $x$\,. In
addition,
$\sup_{y\in\mathbb S}\Pi(x,y)=1$ making $\Pi(x,y)$ a deviability
density in $y$\,. If $A'\subset A$\,, then $\Phi_{\mathbf\Pi}(A',x)$ is a
continuous upper compact function of $x$\,.
\item Let $F$ be a closed subset of $\mathbb S$ with  nonempty
  boundary $\partial F$\,. If $x\notin F$\,, then there exists $\hat x\in\partial
  F$\,, such that $\Pi(x,\hat x)=\Pi(x,\partial F)=\Pi(x,F)$\,.
   If $\Pi(x,F)>0$\,, then
  $\tilde\Pi(\hat x, \tilde x)=1$\,, for any
  $\tilde x\in F$ with $\Pi(x,\tilde x)=\Pi(x,F)$\,.
\end{enumerate}\end{lemma}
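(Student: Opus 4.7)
The plan is to dispatch the four parts in turn, leaning on Hypotheses \textbf{2.1}--\textbf{2.5} together with the semigroup identities \eqref{eq:26}, \eqref{eq:71} and \eqref{eq:70} already in hand. For part 1 I argue by contradiction. If the conclusion fails, there exist $x^n\in K$, trajectories $X^n$ and times $T_n\to\infty$ with $\Pi_{x^n}(X^n)>\eta$ yet $\inf_{0\le t\le T_n}d(X^n_t,A)\ge\delta$. Hypothesis \textbf{2.2} makes $\bigcup_{x\in K}\{X:\Pi_x(X)\ge\eta\}$ compact in $\D(\R_+,\mathbb S)$, so $(x^n,X^n)$ admits a subsequential limit $(x^*,X^*)$; upper semicontinuity gives $\Pi_{x^*}(X^*)\ge\eta>0$, and \textbf{2.1} forces $X^*$ to be continuous, so Skorohod convergence upgrades to locally uniform convergence and $\inf_{t\ge 0}d(X^*_t,A)\ge\delta$, contradicting \textbf{2.4(1)}.

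Part 2 is the principal obstacle. Fix $x,y$ first and set $\alpha=\liminf_{t\to\infty}\Pi_{x,t}(y)$, $\beta=\limsup_{t\to\infty}\Pi_{x,t}(y)$. Along a sequence $t_n\to\infty$ realising $\beta$, pick near-maximising trajectories $X^n$ in \eqref{eq:5}; part 1 supplies a time $s_n\le t_n$ and an attractor $a(n)\in A$ with $X^n_{s_n}$ close to $a(n)$, and \eqref{eq:70} combined with upper semicontinuity of $\Pi_{z,\tau}(y)$ in $z$ yields $\Pi_{x,t_n}(y)\approx\Pi_{x,s_n}(a(n))\,\Pi_{a(n),t_n-s_n}(y)$. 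By \textbf{2.4(2)}, $\Pi_{a,\Delta}(a)=1$ for every $\Delta\ge 0$, so concatenating via \eqref{eq:71} gives, for any $\Delta\ge0$, $\Pi_{x,t_n+\Delta}(y)\ge\Pi_{x,s_n}(a(n))\,\Pi_{a(n),\Delta}(a(n))\,\Pi_{a(n),t_n-s_n}(y)\approx\beta$. Hence $\Pi_{x,t}(y)\ge\beta-\epsilon$ eventually, so $\alpha\ge\beta$ and the limit exists. To promote this to the joint limit $\Pi_{x^t,t}(y^t)\to\Pi(x,y)$, use \textbf{2.4(5)} to interpose short transitions between $x^t,x$ and between $y,y^t$ of deviability close to $1$ and iterate \eqref{eq:71}. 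Identity \eqref{eq:68} is obtained by taking $t\to\infty$ in \eqref{eq:26} with $s(t)\to\infty$ and $t-s(t)\to\infty$, the sup over $z$ being confined to a fixed compact by \textbf{2.5}.

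Part 3 is then largely bookkeeping. Joint continuity of $\Pi(x,y)$ restates the sequential statement of part 2. Upper compactness in $y$ uniformly over $x\in K$ follows from \textbf{2.5}: for $\gamma>0$ and $K\in\mathcal K(\mathbb S)$, \textbf{2.5} supplies $K'$ with $\Pi_{x,t}(\mathbb S\setminus K')<\gamma$ for large $t$ and $x\in K$, whence $\bigcup_{x\in K}\{y:\Pi(x,y)\ge\gamma\}\subset K'$, and this set is closed by continuity. The normalisation $\sup_y\Pi(x,y)=1$ combines part 1 with \textbf{2.4(4)} (one can reach $A$ with deviability near $1$) and \textbf{2.4(2)} (and then rest at the attractor). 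Continuity and upper compactness of $x\mapsto\Phi_\Pi(A',x)=\sup_{a\in A'}\Pi(a)\Pi(a,x)$ follow from those of $\Pi(a,x)$ jointly in $(a,x)$ together with the upper compactness of the density $a\mapsto\Pi(a)$ on $A'$.

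For part 4, continuity and upper compactness of $\Pi(x,\cdot)$ ensure its supremum on the closed set $\partial F$ is attained at some $\hat x\in\partial F$. For a maximiser $\tilde x\in F$, a near-optimal long-time trajectory from $x$ to $\tilde x$ must cross $\partial F$ (since $x\notin F$); extracting a cluster-point crossing in $\partial F$ and invoking \eqref{eq:70} under the attractor-waiting device of part 2, together with \eqref{eq:68}, yields $\Pi(x,\tilde x)=\Pi(x,\hat x)\Pi(\hat x,\tilde x)$. The bounds $\Pi(x,\hat x)\le\Pi(x,\partial F)\le\Pi(x,F)=\Pi(x,\tilde x)$ and $\Pi(\hat x,\tilde x)\le 1$ force equalities throughout, so $\Pi(x,\hat x)=\Pi(x,\partial F)=\Pi(x,F)$. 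When $\Pi(x,F)>0$, dividing the factorisation by $\Pi(x,\hat x)$ gives $\Pi(\hat x,\tilde x)=1$, whence $\tilde\Pi(\hat x,\tilde x)=1$; the assertion for arbitrary $\tilde x\in F$ with $\Pi(x,\tilde x)=\Pi(x,F)$ follows by repeating the crossing argument for each such $\tilde x$.
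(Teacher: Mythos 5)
Your proof of part 1 has a genuine gap that invalidates the contradiction argument. After extracting a limit trajectory $X^*$ with $\Pi_{x^*}(X^*)\ge\eta$ and $\inf_{t\ge 0}d(X^*_t,A)\ge\delta$, you claim this contradicts \textbf{2.4(1)}. But \textbf{2.4(1)} only constrains trajectories with $\Pi_{x^*}(X^*)=1$; it says nothing about trajectories whose deviability is merely $\ge\eta$ for some $\eta\in(0,1)$. A trajectory with positive but sub-unit deviability is \emph{a priori} permitted to avoid $A$ forever as far as \textbf{2.4(1)} is concerned, so no contradiction is reached. The paper's proof of part 1 does something essentially different: using \textbf{2.2} and \textbf{2.4(1)} it first shows that the supremum $q=\sup_{x'\in K'\cap D}\sup\{\Pi_{x'}(X'):X'_t\in K'\cap D\ \forall t\in[0,T']\}$ is attained and is strictly less than $1$ (because any trajectory achieving deviability $1$ must reach the $\delta$-neighbourhood of $A$ by time $T'$), and then exploits the Markov-type factorisation \textbf{2.3} to deduce geometric decay $\Pi_x(X)\le q^N$ for trajectories that spend $N$ consecutive unit intervals of length $T'$ in $K'\cap D$. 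That multiplicative step is what converts the ``only deviability-$1$ trajectories hit $A$'' hypothesis into a quantitative statement about sub-unit deviabilities, and it is missing from your argument. (Hypothesis \textbf{2.5} is also needed to furnish the compact $K'$ in which the trajectory lives for large times; your compactness extraction does not supply a time-uniform compact.)

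For part 2 the overall strategy (use part 1 to reach a neighbourhood of $A$, use \textbf{2.4(4)} to repair onto an actual attractor, use \textbf{2.4(2)} to wait there at no cost, concatenate via \textbf{2.3}, then use \textbf{2.4(5)} for the endpoints) is the paper's strategy, but the step where you write $\Pi_{x,t_n}(y)\approx\Pi_{x,s_n}(a(n))\Pi_{a(n),t_n-s_n}(y)$ ``by upper semicontinuity'' is not justified: upper semicontinuity only bounds one side, and you need the quantitative two-sided shuttle of \textbf{2.4(4)} (short excursions $X_{s_n}\leftrightarrow a(n)$ at deviability near $1$), as the paper carries out in \eqref{eq:51}--\eqref{eq:42}. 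Similarly, the paper introduces the intermediate quantity $\hat\Pi_{x,t}(y)$ (deviability of reaching $y$ in time $t$ while visiting $A$) and proves it is \emph{monotone} in $t$; your ``wait $\Delta$ at $a(n)$'' device is the germ of this but should be organized into the monotone comparison to make $\alpha\ge\beta$ airtight. Parts 3 and 4 are roughly in line with the paper's treatment, though part 4 in the paper requires a case analysis on whether $\hat t$ and $t-\hat t$ stay bounded or diverge, which your sketch glosses over; for the bounded case one needs \textbf{2.2} to pass to a limiting trajectory and obtain $\tilde\Pi$ rather than $\Pi$ on the second factor, exactly as stated in the lemma.
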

\begin{proof}
For part 1, we  draw on the proof of part (a) of Lemma
2.2 in  Freidlin and Wentzell \cite[Chapter 4]{wf2}.
For $x'\in\mathbb S$\,, let $\Theta_{x'}$ denote the set of
trajectories $X'$ such that $\mathbf\Pi_{x'}(X')=1$\,. Owing to {\bf 2.2},
this set is compact.
  Let $O_\delta$ denote the open $\delta$--neighbourhood of $A$
and  let $T_{X'}=\inf\{s:\,X'_s\in O_\delta\}\le\infty$\,. Thanks to
{\bf 2.4}(1), $T_{X'}<\infty$ provided $X'\in\Theta_{x'}$\,.
Let
  $\tilde T_{x'}=\sup_{X'\in \Theta_{x'}}T_{X'}$\,. 
 It is a finite upper
  semicontinuous  function of $x'$ and the supremum is attained.
Indeed, let $T_{X'^n}\to \tilde T_{x'}$\,, where $X'^n\in
\Theta_{x'}$\,. By compactness of $\Theta_{x'}$\,, we may assume that
$X'^n\to X'\in\Theta_{x'}$\,. 
   Let $t'$ represent the length of time when
 $X'$  reaches the closed $\delta/2$--neighbourhood of $A$\,. Since
 $X'^n\to X'$ uniformly on $[0,t']$\,, for all $n$ great enough, the
 $X'^n$ reach $O_\delta$ by time $t'$\,,
 i.e., $T_{X'_n}\le t'$\,, proving that $\tilde
 T_{x'}<\infty$\,. 
As $X'^n_{T_{X'^n}-\gamma}\notin O_\delta$\,, 
for all $\gamma\in[0,T_{X'_n})$\,, we have that
$X'_{\tilde T_{x'}-\gamma}\notin O_\delta$\,, for $\gamma\in[0,\tilde T_{x'})$\,,
 hence, $\tilde T_{x'}-\gamma\le T_{X'}$\,, so, $\tilde T_{x'}\le T_{X'}$\,, proving
that $\tilde T_{x'}=T_{X'}$\,. To see that $\tilde T_{x'}$ is upper
semicontinuous,  let $x'^n\to x'$\,. There exist $X'^n\in
  \Theta_{x'^n}$ 
such that 
$\tilde T_{x'^n}=T_{X'^n}$\,. As $\mathbf\Pi_{x'^n}(X'^n)=1$\,, by {\bf 2.2},
the set of
the $X'^n$ is relatively compact, so, for a suitable subsequence,
$X'^n\to  X'\in \Theta_{x'}$\,, which implies by a similar
argument as before that $\limsup_{n\to\infty}T_{X'^n}\le T_{X'}$\,.

 Since, owing to {\bf 2.5},
$\limsup_{t\to\infty}\sup_{x\in K}
\mathbf\Pi_{x}(X':\,X'_t\not\in K')<\eta$\,, for suitable compact
$K'\supset K$\,, we may assume that $X_t\in K'$\,, for all $t\ge t_0$\,.
 Let $D$ represent the set of points that are at distances $\delta$ or
greater away from  $A$\,.
If $K'\cap D=\emptyset$\,, then  one can take $T=t_0+1$\,. We consider the
case where $K'\cap D\not=\emptyset$\,.
  The set  $K'\cap D$ being
compact, $\sup_{x'\in K'\cap D}\tilde T_{x'}$ is attained. 
We denote it by $T'$\,.
Since the set $\cup_{x'\in K'\cap D}\{X':\,
  \mathbf\Pi_{x'}(X')\ge\gamma\}$ is compact, for any $\gamma>0$\,,
$\sup_{x'\in K'\cap D}\sup_{X':\,X'_t\in K'\cap D\text{ for all
  }t\in[0,T']}\mathbf\Pi_{x'}(X')$ is attained.
 On the other hand, 
no $X'$ in the supremum is such that 
 $\mathbf\Pi_{x'}(X')=1$ because it takes it the length of
 time greater than $T'$ to get to
$O_\delta$\,. Therefore, 
 the supremum is less than 1. 
 We have that $(\theta_{t_0}X)_u\in K'\cap D$\,, for all  $u\ge 0$\,.
If $X_t$  belongs to $K'\cap D$\,, for all $t\in [t_0,t_0+N]$\,, then
\begin{equation*}
      \mathbf\Pi_{x}(X)\le\prod_{m=0}^N
\sup_{x'\in K'\cap D}\sup_{X'\in  \pi_{T'}^{-1}(\pi_{T'}(\theta_{m+t_0}X))}
\mathbf\Pi_{x'}(X')
\le \bl(\sup_{x'\in K'\cap D}\sup_{X'_t\in K'\cap D\text{ for all }
  t\in[0,T']} \mathbf\Pi_{x'}(X')\br)^N\,.
\end{equation*}
Hence, if $N$ is such that the rightmost side is less than $\eta$\,,
then
$X_t$ reaches $O_\delta$ by
time $t_0+N$\,. Such  $N$ depends on $\eta$\,, $\delta$ and $K$ only.

We prove part 2.
Let us begin by proving that
\begin{subequations}
  \begin{align}
  \label{eq:17}
  \limsup_{t\to\infty}\Pi_{x^t,t}(y^t)=\limsup_{t\to\infty}\Pi_{x,t}(y)
\intertext{ and }
  \liminf_{t\to\infty}\Pi_{x^t,t}(y^t)=\liminf_{t\to\infty}\Pi_{x,t}(y)\,.
  \label{eq:17a}
\end{align}
\end{subequations}
By {\bf 2.4}(5), given  $\epsilon>0$\,, there exist $t_0$ and $t_1$
such that $\Pi_{x^t,t_0}(x)\ge 1-\epsilon$
and $\Pi_{y,t_1}(y^t)\ge 1-\epsilon$\,, for all $t$ great enough.
Therefore, invoking \eqref{eq:71},
\begin{equation*}
  \Pi_{x^{t+t_0+t_1},t+t_0+t_1}(y^{t+t_0+t_1})\ge
  \Pi_{x^{t+t_0+t_1},t_0}(x)\Pi_{x,t}(y)
\Pi_{y,t_1}(y^{t+t_0+t_1})
\ge (1-\epsilon)^2\Pi_{x,t}(y)\,,
\end{equation*}
so,
\begin{equation*}
    \limsup_{t\to\infty}\Pi_{x^t,t}(y^t)\ge\limsup_{t\to\infty}\Pi_{x,t}(y)\,.
\end{equation*}
The other inequalities needed for \eqref{eq:17} and \eqref{eq:17a} are
proved similarly.

Let us prove that the $\Pi_{x,t}(y)$ converge. We may and will assume that
$\limsup_{t\to\infty}\Pi_{x,t}(y)>\eta>0$\,. 
Assuming $t$ is great enough,
let $X$ be such that $X_0=x$\,, $X_t=y$ 
and $\Pi_{x,t}(y)=\mathbf\Pi_{x}(X)\ge\eta$\,.
Given  $\epsilon>0$\,,
 part 1 and assumption {\bf 2.4}(4) furnish $\delta>0$ and $t>0$
 such that $X$ reaches the
closed $\delta$--neighbourhood of $A$ at some  $\tilde t\le t$ and,
for some  $s_0>0$\,,
 $s_1>0$\,,  and $a\in A$\,,
$\Pi_{X_{\tilde t},s_0}(a)\ge 1-\epsilon$
 and $\Pi_{a,s_1}(X_{\tilde t})\ge 1-\epsilon$\,. 
 Let $\tilde X$ be a trajectory such that 
$\tilde X_s=X_s$\,, for $s\le \tilde t$\,, 
$\tilde X_{\tilde t+s_0}=a$\,, $\mathbf\Pi_{X_{\tilde t}}(\theta_{\tilde t}\tilde
X)=\Pi_{X_{\tilde t},s_0}(a)$\,, 
$\tilde X_{\tilde t+s_0+s_1+s}=X_{\tilde t+s}$\,, for
$s\ge 0$\,, and
 $\mathbf\Pi_{a}(\theta_{\tilde t+s_0}\tilde
X)=\Pi_{a,s_1}(X_{\tilde t})$\,. 
(It is worthwhile keeping in mind that $\tilde t$\,,
$s_0$\,, $s_1$ and $a$ depend, generally speaking, on $x$\,.)
 In words,
$\tilde X$ agrees with $X$ until $\tilde t$\,, optimally 
gets from $X_{\tilde t}$ to $a$ in time $s_0$\,, 
 optimally 
comes back to $X_{\tilde t}$ 
in time $s_1$ and then retraces $X$ with a shift in
time. By {\bf 2.3}, 
\begin{multline}
  \label{eq:51}
      \mathbf\Pi_x(\tilde X)=\mathbf\Pi_x(\pi_{\tilde
        t}^{-1}(\pi_{\tilde t}
\tilde X))
\mathbf\Pi_{X_{\tilde t}}(\theta_{\tilde t}\tilde X)
=\mathbf\Pi_x(\pi_{\tilde t}^{-1}(\pi_{\tilde t} X))
\mathbf\Pi_{X_{\tilde t}}(\pi_{s_0}^{-1}
\pi_{s_0}(\theta_{\tilde t}\tilde X))
\mathbf\Pi_{a}(\theta_{\tilde t+s_0}\tilde X)\\
=\mathbf\Pi_x(\pi_{\tilde t}^{-1}(\pi_{\tilde t} X))
\mathbf\Pi_{X_{\tilde t}}(\pi_{s_0}^{-1}
\pi_{s_0}(\theta_{\tilde t}\tilde X))
\mathbf\Pi_{a}(\pi^{-1}_{s_1}\pi_{s_1}(\theta_{\tilde t+s_0}\tilde X))
\mathbf\Pi_{\tilde X_{\tilde t+  s_0+s_1}}(\theta_{\tilde t+
  s_0+s_1}\tilde X)
\\=\mathbf\Pi_x(\pi_{\tilde t}^{-1}(\pi_{\tilde t} X))
\mathbf\Pi_{X_{\tilde t}}(\pi_{s_0}^{-1}
\pi_{s_0}(\theta_{\tilde t}\tilde X))
\mathbf\Pi_{a}(\pi^{-1}_{s_1}\pi_{s_1}(\theta_{\tilde t+s_0}\tilde X))
\mathbf\Pi_{X_{\tilde t}}(\theta_{\tilde t} X)\\
=\mathbf\Pi_x(\pi_{\tilde t}^{-1}(\pi_{\tilde t} X))
\Pi_{X_{\tilde t},s_0}(a)
\Pi_{a,s_1}(X_{\tilde t})
\mathbf\Pi_{X_{\tilde t}}(\theta_{\tilde t} X)
\,.
 \end{multline}
Since $\Pi_{X_{\tilde t},s_0}(a)\ge 1-\epsilon$ and
$\Pi_{a,s_1}(X_{\tilde t})\ge 1-\epsilon$\,,
\begin{equation}
\label{eq:42}   \mathbf\Pi_x(\tilde X)\ge(1-\epsilon)^2
 \mathbf\Pi_x(\pi_{\tilde t}^{-1}(\pi_{\tilde t} X))
\mathbf\Pi_{X_{\tilde t}}(\theta_{\tilde t} X)= (1-\epsilon)^2\mathbf\Pi_x(X)
=(1-\epsilon)^2\Pi_{x,t}(y)\,. 
\end{equation}
Let $\hat{ \Pi}_{x,t}(y)$ denote the  deviability  of getting
from $x$ to $y$ in time $t$ with visiting $A$\,.
By \eqref{eq:42}, 
\begin{equation}
  \label{eq:57}
\hat{\Pi}_{x,t+s_0+s_1}(y)\ge\Pi_x(\tilde X)\ge
(1-\epsilon)^2\Pi_{x,t}(y)\,,\text{ for
  all }
x\in K\text{ and }y\in\mathbb S\,.
\end{equation}
Since staying
at a point of $A$ ''costs nothing'', $\hat{ \Pi}_{x,t}(y)$ is a monotonically
increasing function of $t$\,. More precisely, let $\hat X_0=x$\,,
$\hat X_t=y$\,, $\hat X_{\hat t}=\hat 
a\in A$\,, for some $\hat t\in[0,t]$\,,  and
$\mathbf\Pi_x(\hat X)=\hat{\Pi}_{x,t}(y)$\,. 
For $u>0$\,, we define $\hat X^u$ by $\pi_{\hat t}\hat X^u=\pi_{\hat
  t}\hat X$\,, $\hat X^u_s=\hat a$\,, for $s\in[\hat t,\hat t+u]$\,,
and $\theta_{\hat t+u}\hat X^u=\theta_{\hat t}\hat X$\,.
In analogy with \eqref{eq:51}, on noting that, by {\bf 2.4}(2),
$\mathbf \Pi_{\hat a,u}(\hat a)=1$, we have that
\begin{multline*}
\mathbf\Pi_x(\hat X^u)
  =\mathbf\Pi_x(\pi_{\hat t}^{-1}(\pi_{\hat t}\hat X))
\mathbf\Pi_{\hat X_{\hat t}}(\pi^{-1}_u\pi_u(\theta_{\hat t}\hat X^u))
\mathbf\Pi_{\hat X_{\hat t}}(\theta_{\hat t+u}\hat X^u)
=\mathbf\Pi_x(\pi_{\hat t}^{-1}(\pi_{\hat t}\hat X))
\mathbf\Pi_{\hat X_{\hat t}}(\theta_{\hat t}\hat X)\\
=\mathbf\Pi_x(\hat X)\,.
\end{multline*}
Therefore,  
$\hat{\Pi}_{x,t}(y)=\mathbf\Pi_x(\hat X)=\mathbf\Pi_x(\hat X^u)\le \hat{\Pi}_{x,t+u}(y)$\,. Hence, there exists $\lim_{t\to\infty}\hat{\Pi}_{x,t}(y)$\,.  By \eqref{eq:57}, 
$\lim_t\hat{\Pi}_{x,t}(y)\ge (1-\epsilon)^2\limsup_t\Pi_{x,t}(y)$\,.
On the other hand,
$\hat{\Pi}_{x,t}(y)\le\Pi_{x,t}(y)$ 
so that 
$\lim_t\Pi_{x,t}(y)$ exists and equals $\lim_t\hat{\Pi}_{x,t}(y)$\,.

We prove \eqref{eq:68}.
Taking limits in \eqref{eq:26} yields
\begin{equation*}
  \Pi(x,y)\ge\sup_{z\in\mathbb S}\Pi(x,z)\Pi(z,y)\,.
\end{equation*}
By \eqref{eq:26} and \eqref{eq:57}, for any $\epsilon>0$
 and  compact $K'$\,, for certain $s_0(x)$\,, $s_1(x)$\,, $s_0(z)$ and
 $s_1(z)$\,, assuming $s'$ and $t'$ are great enough,
\begin{equation*}
        \Pi_{x,s'+t'}(y)\le
\frac{1}{(1-\epsilon)^4}\sup_{z\in K'}\hat{\Pi}_{x,s'+s_0(x)+s_1(x)}(z)
\hat{\Pi}_{z,t'+s_0(z)+s_1(z)}(y)\vee \Pi_{x,s'}(\mathbb S\setminus K')
\vee\epsilon\,,
\end{equation*}
where  $u\vee v=\max(u,v)$\,.
Letting $s'\to\infty$ and $t'\to\infty$ yields, by monotonicity and {\bf
  2.5}, for suitable $K'$\,,
\begin{equation*}
          \Pi(x,y)\le
\frac{1}{(1-\epsilon)^4}\sup_{z\in K'}\Pi(x,z)
\Pi(z,y)\vee \epsilon
\,,
\end{equation*}
which concludes the proof.

In order to prove  that $\Pi(x,y)$ is continuous, 
we note that, by {\bf 2.4}(5)\,, given  $\epsilon>0$\,, there
exists $\delta>0$ such that, if $d(x,x')<\delta$ and
$d(y,y')<\delta$\,, then, for some $t_0$\,, $t_1$\,, $t_0'$ and
$t_1'$\,, we have that
 $\Pi_{x',t_0}(x)\ge 1-\epsilon$\,, $\Pi_{x,t_1}(x')\ge 1-\epsilon$\,,
$\Pi_{y',t_0'}(y)\ge 1-\epsilon$
and $\Pi_{y,t_1'}(y')\ge 1-\epsilon$\,.
Then $\Pi_{x',t+t_0+t_1'}(y')\ge
\Pi_{x',t_0}(x)\Pi_{x,t}(y)\Pi_{y,t_1'}(y')\ge (1-\epsilon)^2\Pi_{x,t}(y)$\,.
Similarly, $\Pi_{x,t+t_1+t_0'}(y)\ge
\Pi_{x,t_1}(x')\Pi_{x',t}(y')\Pi_{y',t_0'}(y)\ge
(1-\epsilon)^2\Pi_{x',t}(y')$\,.
Letting $t\to\infty$ obtains the result.
The upper compactness follows by the tightness property in {\bf 2.5}.
As $\sup_{y\in\mathbb S}\Pi_{x,t+s_0+s_1}(y)=1$\,, by \eqref{eq:57} and the
monotonic convergence $\hat{\Pi}_{x,t+s_0+s_1}(y)\uparrow \Pi(x,y)$\,, as $t\to\infty$\,,
$\sup_{y\in\mathbb S}\Pi(x,y)=1$\,. That $\Phi_\Pi(A',x)$ is
continuous and upper compact in $x$ follows from $A$ being locally finite.

We now prove part 4. We may and will assume that $\Pi(x,F)>0$\,.
 As $F$ is closed and $\Pi(x,y)$ is upper compact in $y$\,, there exists $\tilde x\in F$
such that $\Pi(x,\tilde x)=\Pi(x,F)$\,.
Consider $\hat X^t$\,, with $\hat X_0=x$ and $\hat X^t_t=\tilde x$\,, 
such that $\mathbf\Pi_x(\hat X^t)=\Pi_{x,t}(\tilde x)$\,. (Since
$\Pi_{x,t}(\tilde x)>0$\,, for all $t$ great enough,
the trajectory
$\hat X^t$ exists for all those $t$\,.)
Let $\hat t=\inf\{s:\, X^t_s\in F\}$ and
$\hat x^t=\hat X^t_{\hat t}$\,. Since 
$\hat X^t$ is a continuous trajectory,
$\hat x^t\in\partial F$\,.  
As $\hat X^t$ is an optimal trajectory from $x$ to $\tilde x$\,, as in
\eqref{eq:70}, 
\begin{equation}
  \label{eq:20}
  \Pi_{x,t}(\tilde x)=\Pi_{x,\hat t}(\hat x^t)
\Pi_{\hat x^t,t-\hat t}(\tilde x)\,.
\end{equation}
Let $t\to\infty$\,.  
 Suppose that, along a subsequence, $\hat t\to\infty$ and
  $t-\hat t\to\infty$\,. 
Since $\Pi_{x,t}(\tilde x)$ is bounded away from 0\,,
 for all $t$ great enough, so is $\Pi_{x,\hat t}(\hat
 x^t)$\,. By {\bf 2.5}, $\hat x^t$ is a relatively compact set of
 points of $\mathbb S$\,. Let $\hat x$ represents a limit point of the
 set. Letting $t\to\infty$\,, $\hat t\to\infty$ and $t-\hat
 t\to\infty$ in \eqref{eq:20} yields, by Lemma \ref{le:attraction},
$\Pi(x,\tilde x)=\Pi(x,\hat x)\Pi(\hat
x,\tilde x)$\,. As $\hat x\in F$ and $\Pi(x,\tilde x)\ge
\Pi(x,y)$\,, for all $y\in  F$\,,
 we have that $\Pi(\hat x,\tilde x)=1$ and $\Pi(x,\hat
 x)=\Pi(x,\tilde x)$\,. 

Let us consider the case where, for some subsequence,
$\hat t\to\infty$ and $t-\hat t$ is bounded.
Since $\Pi(x,F)>0$\,, we have that $\lim_{t\to\infty} \mathbf\Pi_x(\hat X^t)>0$\,, so, 
along a subsubsequence, $\hat X^t\to \hat X$ and $t-\hat t\to \breve
t$\,. Hence, by {\bf 2.2},
$\limsup\mathbf\Pi_{\hat x^t}(\theta_{t-\hat t}\hat X^t)\le \mathbf\Pi_{\hat
  x}(\theta_{\breve t}\hat X)\le\tilde\Pi(\hat x,\tilde x)$\,. By \eqref{eq:20} once again, on noting that
$\Pi_{\hat x^t,t-\hat t}(\tilde x)=\mathbf\Pi_{\hat x^t}(\theta_{t-\hat
  t}\hat X)$\,, we have that
$\Pi(x,\tilde x)\le\Pi(x,\hat x)\tilde\Pi(\hat
x,\tilde x)$\,, which enables us to obtain, in a similar fashion as
before, that $\tilde\Pi(\hat x,\tilde x)=1$ and that $\Pi(x,\hat
x)=\Pi(x,\tilde x)$\,.
The case where, along a subsequence, $\hat t $ is bounded and $t-\hat
t\to\infty$\,, is tackled similarly.
\end{proof}
\begin{remark}
  As the proof of part 4 shows, if $\Pi(x,F)>0$\,, then
  $\partial F\not=\emptyset$\,.
\end{remark}
By \eqref{eq:68}, for all sets $\Gamma$ and $\Gamma'$\,,
\begin{equation}
  \label{eq:33}
  \Pi(\Gamma,\Gamma')=\sup_{C\subset\mathbb S}\Pi(\Gamma,C)\Pi(C,\Gamma')\,.
\end{equation}
The following implication is noteworthy:
\begin{equation}
  \label{eq:38}
  \Pi(\Gamma,C)\Pi(C,\Gamma')\le \Pi(\Gamma,\Gamma')\,.
\end{equation}
We also note that $\Pi(x,y)\le \tilde\Pi(x,y)$ and that 
$\Pi(x,a)=\tilde\Pi(x,a)$\,, provided $a\in A$\,.
\begin{lemma}
  \label{le:rep_1}
The function $\Pi(x)$ is continuous,
\begin{equation}
  \label{eq:9}
  \Pi(x,y)=\sup_{a\in A }\Pi(x,a)\Pi(a,y)
\end{equation}
and \begin{equation}
  \label{eq:1a}
\mathbf  \Pi(x)=\sup_{ a\in A }\mathbf \Pi(a)\Pi(a,x)\,.
\end{equation}
In addition, $\Pi(x,A)=1$\,, for all $x$\,, and $\mathbf\Pi(A)=1$\,.
 \end{lemma}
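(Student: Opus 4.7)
The plan is to establish \eqref{eq:9} first, then derive an ``invariance'' identity $\Pi(y)=\sup_x\Pi(x)\Pi(x,y)$ by passing to the LD limit in the defining invariance of $P^n$, and combine the two to read off \eqref{eq:1a}. The normalisations $\Pi(x,A)=1$ and $\Pi(A)=1$ then follow by taking $\sup_y$ (respectively $\sup_x$) of those two identities and using the facts from Lemma \ref{le:attraction} part 3 that $\sup_y\Pi(x,y)=1$ and that $\sup_x\Pi(x)=1$ (the latter holding because $\Pi$ is a deviability density). Continuity of $\Pi(x)$ will then drop out of \eqref{eq:1a}, since it rewrites $\Pi(x)=\Phi_\Pi(A,x)$ and Lemma \ref{le:attraction} part 3 asserts that $\Phi_\Pi(A',\cdot)$ is continuous for $A'\subset A$.

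For \eqref{eq:9}, I would use the auxiliary quantity $\hat\Pi_{x,t}(y)$ introduced in the proof of Lemma \ref{le:attraction}. Applying the max Markov property \eqref{eq:3} at the time $\hat t\in[0,t]$ at which the trajectory visits some $a\in A$ yields
\[
\hat\Pi_{x,t}(y)=\sup_{a\in A}\;\sup_{\hat t\in[0,t]}\Pi_{x,\hat t}(a)\,\Pi_{a,t-\hat t}(y)\le\sup_{a\in A}\tilde\Pi(x,a)\,\tilde\Pi(a,y)=\sup_{a\in A}\Pi(x,a)\,\Pi(a,y),
\]
the last equality using the remark preceding this lemma that $\tilde\Pi=\Pi$ when one argument lies in $A$. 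Letting $t\to\infty$ and invoking the monotone convergence $\hat\Pi_{x,t}(y)\uparrow\Pi(x,y)$ from the proof of Lemma \ref{le:attraction} gives ``$\le$'' in \eqref{eq:9}, while ``$\ge$'' is immediate from \eqref{eq:68}.

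For the invariance identity, I would pass to the LD limit in $P^n=\int\mathbf{P}(X^n_t\in\cdot\mid X^n_0=x)\,P^n(dx)$. A tropical Laplace-type argument, resting on the exponential tightness of $P^n$, the joint upper semicontinuity and uniform upper compactness of $\Pi_x$ from \textbf{2.2}, and the uniform tightness \textbf{2.5}, yields, for every bounded continuous $f\ge 0$ and every fixed $t\ge 0$, that the $n$-th root of $\iint f(y)^n\,\mathbf{P}(X^n_t\in dy\mid X^n_0=x)\,P^n(dx)$ converges to $\sup_y f(y)\sup_x\Pi(x)\Pi_{x,t}(y)$. The LHS equals $\int f^n\,dP^n$ by invariance and hence tends to $\sup_y f(y)\Pi(y)$. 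Varying $f$ produces $\Pi(y)=\sup_x\Pi(x)\Pi_{x,t}(y)$ for every $y$ and every $t$; letting $t\to\infty$, after localising the supremum in $x$ to a compact set via \textbf{2.5}, yields $\Pi(y)=\sup_x\Pi(x)\Pi(x,y)$. Substituting \eqref{eq:9} and re-applying this identity at $y=a\in A$ delivers \eqref{eq:1a}.

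The main obstacle is the LD limit step just described: interchanging ``tropical integration'' against $P^n$ with the LD limit $n\to\infty$ is the kind of idempotent Fubini argument developed in Puhalskii \cite{Puh97,Puh01} and must be carried out carefully---first localising $x$ to a compact using exponential tightness of $P^n$, then using \textbf{2.2} to pass to the limit uniformly there, and finally localising again via \textbf{2.5} before letting $t\to\infty$ (since $\Pi_{x,t}(y)\to\Pi(x,y)$ is only pointwise). Once that step is secured, the remaining deductions---applying $\sup_y$ and $\sup_x$ for the normalisations and reading off continuity of $\Pi(x)=\Phi_\Pi(A,x)$ from Lemma \ref{le:attraction} part 3---are mechanical.
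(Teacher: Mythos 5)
Your proposal is correct and follows essentially the same route as the paper: pass to the LD limit in the invariance equation to get $\Pi(y)=\sup_x\Pi(x)\Pi(x,y)$, use the $\hat\Pi_{x,t}$ ``visit $A$'' device to get \eqref{eq:9}, combine the two for \eqref{eq:1a}, and read off the normalisations by taking suprema. The one small deviation is harmless reordering---you derive continuity of $\Pi(x)$ at the end from $\Pi(x)=\Phi_\Pi(A,x)$ and the continuity/upper compactness of $\Phi_\Pi(A',\cdot)$ asserted in Lemma~\ref{le:attraction}(3), whereas the paper obtains it earlier by showing directly that both sides of $\Pi(x)=\sup_y\Pi(y)\Pi(y,x)$ are upper and lower semicontinuous in $x$; and your appeal to ``$\tilde\Pi=\Pi$ when one argument lies in $A$'' is stated in the paper only for the second argument, though the first-argument case follows by the identical monotonicity observation (using $\Pi_{a,s}(a)=1$).
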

\begin{proof}
We show that
  \begin{equation}
    \label{eq:21}
\mathbf    \Pi(x)=\sup_{y\in\mathbb S}\mathbf\Pi(y)\Pi(y,x)\,.
  \end{equation}
By $P^n$ being invariant, for a nonnegative bounded continuous
function $f$\,,
\begin{equation*}
\int_{\mathbb S}f(x)^n\,P^n(dx)=
  \int_{\mathbb S}\int_{\mathbb S}f(y)^n\,\mathbf P(X^n_t\in dy|X^n_0=x)\,P^n(dx)\,.
\end{equation*}
Since $\bl(\int_{\mathbb S}f(y)^n\,\mathbf P(X^n_t\in
dy|X^n_0=x_n)\br)^{1/n}\to \sup_{y\in\mathbb S}f(y)\Pi_{x,t}(y)$
provided $x_n\to x$\,, with $x_n$ coming from the support of the
distribution of $P^n$\,,  and since the
$P^n$ LD converge to $\mathbf\Pi$ along a subsequence, 
by Lemma \ref{le:cont},
\begin{equation}
  \label{eq:11}
  \sup_{x\in\mathbb S}f(x)\, \mathbf\Pi(x)=
  \sup_{x\in\mathbb S}\sup_{y\in\mathbb S}f(y)\, \Pi_{x,t}(y) \mathbf\Pi(x)\,.
\end{equation}
   $\sup_{x\in\mathbb S}
\Pi_{x,t}(y)\mathbf\Pi(x)$ may be taken over  $x$ from a compact set,
so,  it  is an upper semicontinuous function of $y$\,. By \eqref{eq:11},
\begin{equation*}
  \mathbf \Pi(y)=\sup_{x\in\mathbb S}
\Pi_{x,t}(y)\mathbf\Pi(x)\,,
\end{equation*}
 which implies \eqref{eq:21} in analogy with the proof of \eqref{eq:68}.
By a similar argument as above, the righthand side of \eqref{eq:21}
   is an upper
semicontinuous function of $x$\,. On the other hand, it is a lower
semicontinuous function of $x$ because  $\Pi(x,y)$ is a continuous
function of $y$\,.
Hence, $\mathbf\Pi(x)$ is continuous.

By \eqref{eq:33},
 $\Pi(x,y)\ge\Pi(x,a)\Pi(a,y)$\,, for all $a\in A$\,, so, when proving 
\eqref{eq:9}, we may assume that $\Pi(x,y)>0$\,.
By the proof of part 2 of Lemma \ref{le:attraction}, given
$\epsilon>0$\,, for all $t$ great enough
$\Pi_{x,t}(y)\le \hat{\Pi}_{x,t}(y)/(1-\epsilon)$\,, where $\hat
{\Pi}_{x,t}(y)$ is the  deviability to get from $x$ to $y$ in
time $t$ with visiting $A$\,. We may also assume that
$\Pi_{x,t}(y)\ge (1-\epsilon)\Pi(x,y)$\,. Let $\hat X$ be such that 
$\hat X_0=x$\,, $\hat X_t=y$\,, $\hat X_{\hat t}=\hat a$\,, where
$\hat a\in A$\,, $\hat
t\in [0,t]$\,, and $\mathbf\Pi_x(\hat X)\ge (1-\epsilon)^2\Pi_{x,t}(y)$\,.
We have that $\Pi_{x,\hat t}(\hat a)\ge \mathbf\Pi_x(\pi_{\hat t}^{-1}(\pi_{\hat
  t}\hat X))$ and $\Pi_{\hat a,t-\hat t}(y)\ge \mathbf\Pi_{\hat a}(\theta_{\hat t}\hat
X)$ so that  
\begin{equation*}
  \Pi_{x,\hat t}(\hat a)\Pi_{\hat a,t-\hat t}(y)
\ge \mathbf\Pi_x(\pi_{\hat t}^{-1}\pi_{\hat
  t}(\hat X)) \mathbf\Pi_{\hat a}(\theta_{\hat t}\hat
X)=\mathbf\Pi_x(\hat X)\ge(1-\epsilon)^3\Pi(x,y)\,.
\end{equation*}
As follows from the proof of part 2 of Lemma \ref{le:attraction},
$\Pi(x,\hat a)\ge\Pi_{x,\hat t}(\hat a)$ and $\Pi(\hat a,y)
\ge\Pi_{\hat a,t-\hat t}(y)$ so that
$\Pi(x,\hat a)\Pi(\hat a,y)\ge (1-\epsilon)^3\Pi(x,y)$\,.

For \eqref{eq:1a}, note that, by \eqref{eq:21} and \eqref{eq:9},
\begin{multline*}
  \mathbf \Pi(x)=\sup_{y\in\mathbb S}\mathbf\Pi(y)\Pi(y,x)=
\sup_{y\in\mathbb S}\mathbf\Pi(y)\sup_{a\in A}(\Pi(y,a)\Pi(a,x))\\
=\sup_{a\in A}\sup_{y\in\mathbb S}(\mathbf\Pi(y)\Pi(y,a))\Pi(a,x)
=\sup_{a\in A}\mathbf\Pi(a)\Pi(a,x)\,.
\end{multline*}
By  \eqref{eq:9} and part 3 of Lemma \ref{le:attraction},
$1=\sup_{y\in \mathbb S}\Pi(x,y)= \sup_{a\in A}\Pi(x,a)$\,,
so, $  \Pi(x,A)=1\,.$ The argument for $\mathbf\Pi(A)=1$ is analogous.
 \end{proof}
 \begin{remark}
   One can prove that if 
given  $a\in A$\,, $x\not=a$\,, we have that
  $\Pi_{a,t}(x)<1$\,, for   any $t>0$\,, then
 $\Pi(a,x)<1$\,. 
 \end{remark}
\begin{remark}
  \label{re:norm}
If $\mathbf \Pi(x)\ge\epsilon>0$\,, then the $\sup$ in
\eqref{eq:1a} may be taken over a finite collection of $a$ which
depends on $\epsilon$ only. It follows by the fact that the
collection of $a$ with $\Pi(a)\ge\epsilon$ is finite.
As a consequence, the supremum is attained at some $a$\,. 
It is similar with $\mathbf\Pi(A)=1$ and $\Pi(x,A)=1$\,.
\end{remark}
\begin{remark}
  \label{re:put'}
By {\bf 2.4}(3), for $a,a'\in A$\,, $\Pi(a,a')>0$\,, whence
$\mathbf\Pi(a)>0$\,, for all $a\in A$\,.
\end{remark}
\begin{remark}
  \label{re:cont}
 $\Pi(x,y)$ and  $\mathbf \Pi(x)$ being continuous imply that
\begin{equation*}
  \Phi_{\mathbf \Pi}(\Gamma,\Gamma')=\Phi_{\mathbf \Pi}( \Gamma,\cl \Gamma')=
\Phi_{\mathbf \Pi}(\cl \Gamma,\Gamma')=\Phi_{\mathbf \Pi}(\cl \Gamma,\cl \Gamma')\,.
\end{equation*}
\end{remark}
\begin{lemma}[Max balance I]
    \label{the:maxflow}
 Let $\Gamma$ represent a Borel subset of $\mathbb S$\,.
Then,
 \begin{equation}
   \label{eq:10}
\Phi_{\mathbf \Pi}(\nt \Gamma,\nt \Gamma^c)\le\Phi_{\mathbf \Pi}(\cl \Gamma^c,\cl \Gamma)\,.
\end{equation}
\end{lemma}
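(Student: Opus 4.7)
The plan is to use the invariance of $P^n$ to write down a pre-limit flux balance, pass it to the large deviation limit at fixed $t$ to obtain an inequality between $\Pi_{x,t}$-fluxes, and then let $t\to\infty$ using Lemmas \ref{le:attraction} and \ref{le:rep_1}.

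First, the invariance of $P^n$ yields $\mathbf{P}(X_0^n\in\Gamma)=\mathbf{P}(X_t^n\in\Gamma)$ for every Borel $\Gamma$ and every $t\ge0$, and subtracting $\mathbf{P}(X_0^n\in\Gamma,X_t^n\in\Gamma)$ from both sides gives the flux identity
$$\mathbf{P}(X_0^n\in\Gamma,X_t^n\in\Gamma^c)=\mathbf{P}(X_0^n\in\Gamma^c,X_t^n\in\Gamma).$$
By \textbf{2.1} together with LD convergence of $P^n$ to $\Pi$ (along the subsequence under consideration), the joint laws $Q_n(dx,dy)=P^n(dx)\,\mathbf{P}(X_t^n\in dy|X_0^n=x)$ LD converge on $\mathbb{S}\times\mathbb{S}$ to the deviability with density $(x,y)\mapsto\Pi(x)\Pi_{x,t}(y)$; this is the two-variable analogue of the Laplace-integral manipulation carried out in the proof of Lemma \ref{le:rep_1}. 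Applying the open-set lower bound at $\nt\Gamma\times\nt\Gamma^c$ on the left-hand side of the flux identity and the closed-set upper bound at $\cl\Gamma^c\times\cl\Gamma$ on the right-hand side yields the fixed-$t$ inequality
\begin{equation*}
\sup_{x\in\nt\Gamma,\,y\in\nt\Gamma^c}\Pi(x)\Pi_{x,t}(y)\le\sup_{x\in\cl\Gamma^c,\,y\in\cl\Gamma}\Pi(x)\Pi_{x,t}(y).
\end{equation*}

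It remains to let $t\to\infty$, writing $\mathrm{LHS}(t)$ and $\mathrm{RHS}(t)$ for the two sides above. For $\mathrm{LHS}(t)$, the pointwise convergence $\Pi_{x,t}(y)\to\Pi(x,y)$ from Lemma \ref{le:attraction}(2) gives $\liminf_{t\to\infty}\mathrm{LHS}(t)\ge\Phi_\Pi(\nt\Gamma,\nt\Gamma^c)$. For $\mathrm{RHS}(t)$, the integrand $(x,y)\mapsto\Pi(x)\Pi_{x,t}(y)$ is upper semicontinuous and upper compact, so the supremum is attained at some $(x_t,y_t)\in\cl\Gamma^c\times\cl\Gamma$. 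One may assume $c:=\limsup_{t\to\infty}\mathrm{RHS}(t)>0$, otherwise there is nothing to prove; along a suitable sequence then $\mathrm{RHS}(t)\ge c/2$, which forces $\Pi(x_t)\ge c/2$ since $\Pi_{x_t,t}\le 1$. By Lemma \ref{le:rep_1}, $x_t$ is confined to the compact sublevel set $\{\Pi\ge c/2\}$, and then $\Pi_{x_t,t}(y_t)\ge c/2$ together with \textbf{2.5} applied to that sublevel set confines $y_t$ to a compact $K_2$ for $t$ large. Passing to a convergent subsequence $(x_t,y_t)\to(x_\ast,y_\ast)\in\cl\Gamma^c\times\cl\Gamma$ and invoking continuity of $\Pi$ from Lemma \ref{le:rep_1} together with $\Pi_{x_t,t}(y_t)\to\Pi(x_\ast,y_\ast)$ from Lemma \ref{le:attraction}(2) yields $\limsup_{t\to\infty}\mathrm{RHS}(t)\le\Pi(x_\ast)\Pi(x_\ast,y_\ast)\le\Phi_\Pi(\cl\Gamma^c,\cl\Gamma)$. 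Chaining these bounds with the fixed-$t$ inequality gives \eqref{eq:10}.

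The main obstacle I anticipate is the $t\to\infty$ step: simultaneously ensuring that the optimal $(x_t,y_t)$ lies in a fixed compact set uniformly for large $t$ and that the subsequential limit $(x_\ast,y_\ast)$ inherits the joint convergence $\Pi_{x_t,t}(y_t)\to\Pi(x_\ast,y_\ast)$ requires combining upper compactness of $\Pi$, hypothesis \textbf{2.5}, and Lemma \ref{le:attraction}(2) in a careful way.
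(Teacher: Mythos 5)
Your proposal is correct and follows essentially the same route as the paper: invariance of $P^n$ gives the pre-limit flux identity \eqref{eq:16}, passing to the LD limit at fixed $t$ (via the same mixing-type argument used in the proof of Lemma \ref{le:rep_1}) yields the inequality $\sup_{x\in\nt\Gamma,\,y\in\nt\Gamma^c}\Pi(x)\Pi_{x,t}(y)\le\sup_{x\in\cl\Gamma^c,\,y\in\cl\Gamma}\Pi(x)\Pi_{x,t}(y)$, and then one lets $t\to\infty$. The only minor difference is the implementation of the $t\to\infty$ step, which the paper disposes of by citing the monotone-approximation argument from the proof of part 2 of Lemma \ref{le:attraction}, while you extract a convergent optimizing sequence $(x_t,y_t)$ using upper compactness and condition \textbf{2.5}; both implementations rely on the same ingredients and are sound.
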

\begin{proof}
  By \eqref{eq:1},
\begin{equation*}
  P^n(\Gamma)=\int_{\mathbb S} \mathbf P(X^n_t\in \Gamma|X^n_0=x)P^n(dx)\,.
\end{equation*}
It follows that
\begin{multline*}
    \int_\Gamma\Bl( \mathbf P(X^n_t\in \Gamma|X^n_0=x)
+\mathbf P(X^n_t\in \Gamma^c|X^n_0=x)\Br)\,P^n(dx)\\=
    \int_\Gamma\mathbf P(X^n_t\in \Gamma|X^n_0=x)\,P^n(dx)
+\int_{\Gamma^c}\mathbf P(X^n_t\in \Gamma|X^n_0=x)\,P^n(dx)\,,
\end{multline*}
so, on cancelling like terms,
\begin{equation}
  \label{eq:16}
        \int_\Gamma\mathbf P(X^n_t\in \Gamma^c|X^n_0=x)\,P^n(dx)=
\int_{\Gamma^c}\mathbf P(X^n_t\in \Gamma|X^n_0=x)\,P^n(dx)\,.
\end{equation}
By {\bf 2.1}, if $x^n\to
x$\,, with $x^n$ coming from the support of $P^n$\,, then
  \begin{align*}
\liminf_{n\to\infty}  \mathbf P(X^n_t\in \Gamma^c|X^n_0=x^n)^{1/n}
  \ge \sup_{y\in \nt \Gamma^c}\Pi_{x,t}(y)
\intertext{and}\limsup_{n\to\infty}\mathbf P(X^n_t\in \Gamma|X^n_0=x^n)^{1/n}
\le \sup_{y\in \cl \Gamma}\Pi_{x,t}(y)\,.
\end{align*}
By the LD convergence of the $P^n$ to $\Pi$ along  subsequence $n'$
and Lemma \ref{le:cont}\,,
\begin{align*}\liminf_{n'\to\infty}
\Bl(\int_\Gamma\mathbf P(X^{n'}_t\in \Gamma^c|X^{n'}_0=x)\,P^{n'}(dx)\Br)^{1/n'}
\ge \sup_{x\in\nt \Gamma}\sup_{y\in\nt \Gamma^c}\Pi_{x,t}(y)\,\mathbf \Pi(x)
\intertext{and}\limsup_{n'\to\infty}
\Bl(\int_{\Gamma^c}\mathbf P(X^{n'}_t\in  \Gamma|X^{n'}_0=x)\,P^{n'}(dx)\Br)^{1/n'}
\le \sup_{x\in\cl \Gamma^c}\sup_{y\in \cl \Gamma}\Pi_{x,t}(y)\,\mathbf \Pi(x)\,.
\end{align*}
Hence, by \eqref{eq:16},
\begin{equation*}
  \sup_{x\in\nt \Gamma}\sup_{y\in\nt  \Gamma^c}\Pi_{x,t}(y)\,\mathbf \Pi(x)\le
\sup_{x\in \cl \Gamma^c}\sup_{y\in \cl \Gamma}\Pi_{x,t}(y)\,\mathbf \Pi(x)\,.
\end{equation*}
 Letting $t\to\infty$ yields \eqref{eq:10}, as in the proof of part 2
 of Lemma \ref{le:attraction}.
\end{proof}
Let $\{A',A''\}$ be a nontrivial partition of $A$\,, i.e.,
$A'\cup A''=A$\,, $A'\cap A''=\emptyset$\,, $A'\not=\emptyset$ and
$A''\not=\emptyset$\,. As above, we let  $\partial $ stand for  the boundary of a  set.
\begin{lemma}
  \label{le:attractor}
 Let $\Gamma$ represent a  subset of
$\mathbb S$ such that 
$A'\subset\nt \Gamma$\,,
$A''\subset\nt \Gamma^c$ 
and 
$\Pi(a,\nt \Gamma)=\Pi(a,\cl \Gamma)$\,, for all
$a\in A''$ that satisfy
$\Pi(a)\ge \Phi_{\mathbf \Pi}( A'',\nt \Gamma)$\,.

If 
\begin{equation}
  \label{eq:27}
  \Phi_{\mathbf \Pi}( A'',\nt \Gamma)>
\Phi_{\mathbf \Pi}( A',\cl \Gamma^c)\,,
\end{equation}
then there exist $\hat x\in\partial \Gamma$\,, $\tilde a\in A''$
 and $\hat a\in A''$ such that 
$\Pi(\hat
x,\tilde a)=1$\,, 
 $\Pi(\hat a,\hat x)=\Pi(\hat a,\cl \Gamma)$ and 
$\mathbf\Pi(\hat x)=\mathbf
\Pi(\hat a)\Pi(\hat a,\hat x)=
\Phi_{\mathbf \Pi}( A'',\nt
\Gamma)=\Phi_{\mathbf \Pi}(\nt \Gamma^c,\nt \Gamma)=
\Phi_{\mathbf \Pi}(\cl \Gamma,\cl \Gamma^c)
$\,. Also, $\mathbf\Pi(\hat x)=\mathbf\Pi(\partial \Gamma)$\,. 

\end{lemma}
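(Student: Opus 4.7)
The backbone is to exhibit a specific boundary point $\hat x\in\partial\Gamma$ realizing the maximal flux from $A''$ into $\nt\Gamma$, and then cascade all the claimed equalities. The primary tools are Lemma \ref{le:attraction}(4) to locate the boundary crossing, the representation \eqref{eq:1a}, transitivity \eqref{eq:38}, the max-flux inequality Lemma \ref{the:maxflow}, and the strict inequality \eqref{eq:27}.

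First I locate $\hat a$ and $\hat x$. By Remark \ref{re:norm} the supremum $\Phi_\Pi(A'',\nt\Gamma)$ is attained at some $\hat a\in A''$. Since $\hat a\in A''\subset\nt\Gamma^c$ forbids $\hat a\in\cl\Gamma$, Lemma \ref{le:attraction}(4) with $F=\cl\Gamma$ yields $\hat x\in\partial(\cl\Gamma)\subseteq\partial\Gamma$ with $\Pi(\hat a,\hat x)=\Pi(\hat a,\cl\Gamma)$. As $\Pi(\hat a)\ge\Phi_\Pi(A'',\nt\Gamma)$, the boundary-regularity hypothesis gives $\Pi(\hat a,\cl\Gamma)=\Pi(\hat a,\nt\Gamma)$, so $\Pi(\hat a)\Pi(\hat a,\hat x)=\Phi_\Pi(A'',\nt\Gamma)$.

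Next I bound $\Pi(x)$ for $x\in\partial\Gamma$ and derive all flux identities. Splitting \eqref{eq:1a} across $A=A'\cup A''$: for $a\in A'$, the bound $\Pi(a,x)\le\Pi(a,\cl\Gamma^c)$ caps the $A'$ contribution by $\Phi_\Pi(A',\cl\Gamma^c)$; for $a\in A''$, the bound $\Pi(a,x)\le\Pi(a,\cl\Gamma)$ together with the hypothesis when $\Pi(a)\ge\Phi_\Pi(A'',\nt\Gamma)$ and a trivial dominance otherwise caps the $A''$ contribution by $\Phi_\Pi(A'',\nt\Gamma)$. With \eqref{eq:27} this gives $\Pi(x)\le\Phi_\Pi(A'',\nt\Gamma)$ for every $x\in\partial\Gamma$; matching the lower bound $\Pi(\hat x)\ge\Pi(\hat a)\Pi(\hat a,\hat x)$ pins $\Pi(\hat x)=\Pi(\partial\Gamma)=\Phi_\Pi(A'',\nt\Gamma)=\Pi(\hat a)\Pi(\hat a,\hat x)$. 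The same splitting combined with \eqref{eq:38} to absorb the intermediate factor bounds $\Phi_\Pi(\nt\Gamma^c,\nt\Gamma)$ and $\Phi_\Pi(\cl\Gamma,\cl\Gamma^c)$ above by $\Phi_\Pi(A'',\nt\Gamma)$; the inclusion $A''\subset\nt\Gamma^c$ yields $\Phi_\Pi(\nt\Gamma^c,\nt\Gamma)\ge\Phi_\Pi(A'',\nt\Gamma)$, and Lemma \ref{the:maxflow} applied to $\Gamma^c$ sandwiches $\Phi_\Pi(\nt\Gamma^c,\nt\Gamma)\le\Phi_\Pi(\cl\Gamma,\cl\Gamma^c)$, pinning all three quantities to $\Pi(\hat x)$.

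The main obstacle is producing $\tilde a\in A''$ with $\Pi(\hat x,\tilde a)=1$. Lemma \ref{le:rep_1} gives $\Pi(\hat x,A)=1$ with the supremum attained (Remark \ref{re:norm}); the delicate point is ruling out the attainer lying in $A'$. I would argue by contradiction: if $\Pi(\hat x,A'')<1$, then some $a^*\in A'$ satisfies $\Pi(\hat x,a^*)=1$, whence \eqref{eq:21} yields $\Pi(a^*)\ge\Pi(\hat x)=\Phi_\Pi(A'',\nt\Gamma)$. To close the contradiction one must produce a reverse free path, establishing $\Pi(a^*,\cl\Gamma^c)=1$ and hence $\Phi_\Pi(A',\cl\Gamma^c)\ge\Phi_\Pi(A'',\nt\Gamma)$, violating \eqref{eq:27}. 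Extracting this reverse path from the forward trajectory underlying $\Pi(\hat x,a^*)=1$ — via continuity of $\Pi$ near $\partial\Gamma$, the boundary-regularity hypothesis, and likely a second application of Lemma \ref{le:attraction}(4) — is the hardest technical step and the one I expect to require the most care.
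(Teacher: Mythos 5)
Your construction of $\hat a$ and $\hat x$ and your derivation of all the flux identities are correct, and the flux part is in fact slightly more streamlined than the paper's (the paper carries out a lengthier expansion to reach \eqref{eq:28}). The genuine gap, as you anticipate, is the production of $\tilde a\in A''$ with $\Pi(\hat x,\tilde a)=1$, and your contradiction sketch will not close. From $\Pi(\hat x,a^\ast)=1$ with $a^\ast\in A'$ you do get $\Pi(a^\ast)\ge\Pi(\hat x)$, but there is no way to infer $\Pi(a^\ast,\cl\Gamma^c)=1$: the free deviation underlying $\Pi(\hat x,a^\ast)=1$ is a path \emph{from} $\hat x\in\cl\Gamma^c$ \emph{into} $\nt\Gamma$, which says nothing about the cost of reaching $\cl\Gamma^c$ starting from $a^\ast$, and $\Pi(\cdot,\cdot)$ is not symmetric. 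Indeed \eqref{eq:27} directly forces $\Pi(a^\ast,\cl\Gamma^c)<1$ for any $a^\ast\in A'$ with $\Pi(a^\ast)\ge\Phi_\Pi(A'',\nt\Gamma)$, so no such contradiction is available.

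The paper gets around this by reversing the order of your two steps. Rather than applying Lemma \ref{le:attraction}(4) first and then trying to certify $\Pi(\hat x,A'')=1$, the paper first expands $\Phi_\Pi(\cl\Gamma,\cl\Gamma^c)$ over $A$ using \eqref{eq:1a}, \eqref{eq:9} and \eqref{eq:38} to obtain
\[
\Phi_\Pi(\cl\Gamma,\cl\Gamma^c)=\Phi_\Pi(A',\cl\Gamma^c)\vee\sup_{a\in A''}\Pi(a)\sup_{x\in\cl\Gamma}\Pi(a,x)\Pi(x,A'')\,,
\]
which, combined with Lemma \ref{the:maxflow} and \eqref{eq:27}, yields the strengthened inequality \eqref{eq:30}: $\sup_{a\in A''}\Pi(a)\sup_{x\in\cl\Gamma}\Pi(a,x)\,\Pi(x,A'')\ge\Phi_\Pi(A'',\nt\Gamma)$. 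The extra factor $\Pi(x,A'')$ inside the supremum is precisely the missing ingredient. After the attainer $\hat a$ is read off, \eqref{eq:75} shows that the supremum of $\Pi(\hat a,x)\Pi(x,A'')$ over $x\in\cl\Gamma$ already reaches $\Pi(\hat a,\cl\Gamma)$, so the extremal $\tilde x\in\cl\Gamma$ can be chosen to satisfy \emph{both} $\Pi(\hat a,\tilde x)=\Pi(\hat a,\cl\Gamma)$ \emph{and} $\Pi(\tilde x,A'')=1$. Only then is Lemma \ref{le:attraction}(4) applied, and the conclusion $\tilde\Pi(\hat x,\tilde x)=1$ for \emph{any} extremal $\tilde x$ lets the paper take that particular one and deduce $\Pi(\hat x,\tilde a)\ge\tilde\Pi(\hat x,\tilde x)\,\tilde\Pi(\tilde x,\tilde a)=1$. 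Without first establishing the existence of such a $\tilde x$ (i.e., without \eqref{eq:75}), the information Lemma \ref{le:attraction}(4) gives about your $\hat x$ cannot be converted into $\Pi(\hat x,A'')=1$.
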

\begin{proof}
  We apply Lemma \ref{the:maxflow}.
Since $\cl \Gamma^c\supset
A''$
and $\cl \Gamma\supset
A'$\,, 
with the use of Lemma \ref{le:rep_1} and on recalling
 that $\Pi(a,a)=1$\,,
for $a\in A$\,,  as  well as \eqref{eq:38}, we can write
 the max flux $\cl \Gamma\to\cl \Gamma^c$ 
  as follows, 
  \begin{multline*}
    \Phi_{\mathbf \Pi}(\cl \Gamma,\cl \Gamma^c)=\sup_{x\in \cl \Gamma }\mathbf \Pi(x)\Pi(x,\cl \Gamma^c)=
    \sup_{x\in \cl \Gamma}\mathbf \Pi(x)\sup_{a\in A}\Pi(x,a)\Pi(a, \cl \Gamma^c)\\
=
\sup_{x\in \cl \Gamma}\bl(
\sup_{a\in A}\mathbf\Pi(a)\Pi(a,x)\br)\bl(\sup_{a\in A'}
\Pi(x,a)\Pi(a, \cl \Gamma^c)\vee 
\sup_{a\in A''}\Pi(x,a)\br)\\
=
\sup_{a\in A}\mathbf\Pi(a)\bl(\sup_{a'\in A'}\sup_{x\in \cl \Gamma}
\Pi(a,x)
\Pi(x,a')\Pi(a', \cl \Gamma^c)\vee 
\sup_{a''\in A''}\sup_{x\in \cl \Gamma}
\Pi(a,x)\Pi(x,a'')\br)\\
=
\sup_{a\in A'}\mathbf\Pi(a)\sup_{a'\in A'}\sup_{x\in
  \cl \Gamma}
\Pi(a,x)\Pi(x,a')\Pi(a', \cl \Gamma^c)\vee 
\sup_{a\in A'}\mathbf\Pi(a)\sup_{a''\in A''}\sup_{x\in
  \cl \Gamma}
\Pi(a,x)\Pi(x,a'')\\\vee
\sup_{a\in A''}\mathbf\Pi(a)\sup_{a'\in A'}\sup_{x\in
  \cl \Gamma}
\Pi(a,x)\Pi(x,a')\Pi(a', \cl \Gamma^c)\vee
\sup_{a\in A''}
\mathbf\Pi(a)\sup_{a''\in A''}\sup_{x\in
  \cl \Gamma}
\Pi(a,x)\Pi(x,a'')
\\=
\sup_{a\in A'}\mathbf\Pi(a)
\Pi(a, \cl \Gamma^c)\vee 
\sup_{a\in A'}\mathbf\Pi(a)
\Pi(a,A'')\vee
\sup_{a \in A''}\mathbf\Pi(a)\sup_{a'\in A'}
\Pi(a,a')\Pi(a', \cl \Gamma^c)\\\hspace{8cm}\vee
\sup_{a\in A''}
\mathbf\Pi(a)\sup_{a''\in A''}\sup_{x\in
  \cl \Gamma}
\Pi(a,x)\Pi(x,a'')
\\=
\Phi_{\mathbf \Pi}(A',\cl \Gamma^c)\vee 
\sup_{a\in A''}\mathbf\Pi(a)
\bl(\sup_{a'\in A'}
\Pi(a,a')\Pi(a',\cl \Gamma^c)\vee
\sup_{x\in \cl \Gamma}\Pi(a,x)\Pi(x,A'')\br)\,.
  \end{multline*}
Let us note that
\begin{equation*}
  \sup_{a\in A''}\mathbf\Pi(a)
\sup_{a'\in A'}
\Pi(a,a')\Pi(a',\cl \Gamma^c)\le 
\sup_{a'\in A'}
\Pi(a')\Pi(a',\cl \Gamma^c)= \Phi_{\mathbf \Pi}(A',\cl \Gamma^c)\,,
\end{equation*}
which yields
\begin{equation}
  \label{eq:28}
  \Phi_{\mathbf \Pi}(\cl \Gamma,\cl \Gamma^c)=
\Phi_{\mathbf \Pi}(A', \cl \Gamma^c)\vee 
\sup_{x\in \cl \Gamma}\Phi_{\mathbf \Pi}(A'',x)
\Pi(x,A'')\,.
\end{equation}
Similarly, owing to the conditions that
$A'\subset\nt \Gamma$ and
$ A''\subset\nt \Gamma^c$\,,
 the max flux from $\nt \Gamma^c$ to $\nt \Gamma$ is written as 
\begin{equation*}
  \Phi_{\mathbf \Pi}(\nt \Gamma^c,\nt \Gamma)=
\Phi_{\mathbf \Pi}( A'', \nt \Gamma)\vee 
\sup_{x\in \nt \Gamma^c}\Phi_{\mathbf \Pi}(A',x)\Pi(x,A')\,.
\end{equation*}

Since
$\Phi_{\mathbf \Pi}(A', \nt \Gamma^c)<\Phi_{\mathbf \Pi}(A'', \nt \Gamma)$\,,
we have that $\Phi_{\mathbf \Pi}(\nt \Gamma^c,\nt \Gamma)=\Phi_{\mathbf \Pi}( A'',\nt \Gamma)$\,. 
By max balance I (Lemma \ref{the:maxflow}), $\Phi_{\mathbf \Pi}(\cl \Gamma,\cl \Gamma^c)\ge\Phi_{\mathbf \Pi}(\nt \Gamma^c,\nt
\Gamma)$\,, so, by 
\eqref{eq:27} and \eqref{eq:28},
\begin{equation}
  \label{eq:30}
\sup_{a\in A''}\mathbf\Pi(a)  \sup_{x\in \cl \Gamma}\Pi(a,x)
\Pi(x,A'')\ge\Phi_{\mathbf \Pi}( A'',\nt \Gamma)\,.
\end{equation}
By \eqref{eq:27},
the latter righthand side is positive. Hence, the  $a$ on the
lefthand side can be assumed to belong to a compact set.  Since $A$ is locally
finite, the supremum can actually be taken over a finite set of $a$\,, so, it is
attained at some $\hat a$\,. 
By \eqref{eq:30},  $\mathbf\Pi(\hat a)\Pi(\hat
a,\cl \Gamma)\ge\Phi_{\mathbf \Pi}(A'',\nt \Gamma)$\,,
 so, by hypotheses,
$\Pi(\hat a,\nt \Gamma)=\Pi(\hat a,\cl \Gamma)$\,. 
Therefore, by using \eqref{eq:30} once again, 
\begin{equation}
  \label{eq:75}
  \sup_{x\in
 \cl \Gamma}\Pi(\hat a,x)\Pi(x,A'') \ge
\Pi(\hat a,\cl \Gamma)\,.
\end{equation}
By $\cl \Gamma$ being closed, there exists
$\tilde
 x\in \cl \Gamma$ such
that $\Pi(\hat a,\tilde x)=\Pi(\hat a,\cl \Gamma)$\,. Furthermore,
by $\Pi(\hat a,\cl \Gamma)$ being positive, \eqref{eq:75} implies that $\tilde
x$ can be chosen such that 
$\Pi(\tilde x,A'')=1$\,. By  $A$ being locally finite, there
exists $\tilde a\in A''$ with $\Pi(\tilde x,\tilde a)=1$\,.
  We also have that equality holds in
\eqref{eq:30}, so,
\begin{equation}
  \label{eq:69}
  \Phi_{\mathbf \Pi}(\cl \Gamma,\cl \Gamma^c)=\Phi_{\mathbf \Pi}(\nt \Gamma^c,\nt \Gamma)=
\mathbf\Pi(\hat a)\Pi(\hat a,\cl \Gamma)=\mathbf\Pi(\hat a)\Pi(\hat a,\nt \Gamma)=
\mathbf\Pi(\hat a)\Pi(\hat a,\tilde x)\,.
\end{equation}

By part 4 of Lemma \ref{le:attraction}, there exists $\hat
x\in\partial \Gamma$ such that
 $\tilde\Pi(\hat x,\tilde x)=1$ and $\Pi(\hat a,\hat
 x)=\Pi(\hat a,\tilde x)$\,. 
Therefore, $\Pi(\hat x,\tilde a)=\tilde\Pi(\hat x,\tilde a)\ge 
\tilde\Pi(\hat x,\tilde x)\tilde\Pi(\tilde x,\tilde a)=
\tilde\Pi(\tilde x,\tilde a)=\Pi(\tilde x,\tilde a)$\,, so, $\Pi(\hat
x,\tilde a)=1$\,.

To conclude, note that \eqref{eq:69} yields
$  \mathbf\Pi(\hat x)=\Phi_{\mathbf \Pi}(A,\hat x)
=\mathbf\Pi(\hat a)\Pi(\hat a,\hat x)\,.
$ If  $x\in\partial \Gamma$\,, then  $\Phi_{\mathbf \Pi}(\cl \Gamma,\cl \Gamma^c)\ge
\mathbf \Pi(x)$\,,
which implies that $\mathbf \Pi(x)\le \mathbf\Pi(\hat x)$\,.
\end{proof}
\begin{remark}
  If
$
    \Phi_{\mathbf \Pi}( A'',\nt \Gamma)=
\Phi_{\mathbf \Pi}( A',\cl \Gamma^c)\,$,
 then both expressions equal $\Phi_{\mathbf \Pi}(\cl \Gamma,\cl \Gamma^c)=\Phi_{\mathbf \Pi}(\nt \Gamma^c,\nt \Gamma)$\,.
\end{remark}
Let
  \begin{align}
&    F=\{x:\,\Phi_{\mathbf \Pi}( A',x)\ge
\Phi_{\mathbf \Pi}( A'',x)\},\notag
\\\notag
&\tilde A'=\{a'\in A':\,\mathbf\Pi(a')>\mathbf\Pi(a'')\Pi(a'',a')\,, \text{ for all
}a''\in A''\},\\
 & \tilde A''=\{a''\in A'':\, \mathbf\Pi(a'')>\mathbf\Pi(a')\Pi(a',a'')\,,\text{ for all
  }a'\in A'\},\notag  
\intertext{ and }
  &\breve A'=A'\cup (A''\setminus \tilde A'')
\,.\notag
\end{align}
 By \eqref{eq:1a}, 
 \begin{equation}
   \label{eq:12}
\breve A'\subset  F\,,
 \end{equation}
so $ F\not=\emptyset$\,.
As $\mathbf \Pi(x)$ and  $\Pi(x,y)$ are continuous, 
$F$  is a closed set which agrees with the
closure of its interior.
One can see that  $F^c\not=\emptyset$ if and only if 
 $\tilde A''\not=\emptyset\,$.
\begin{lemma}
  \label{le:ravno}
  \begin{enumerate}
  \item 
Suppose that $\tilde A''\not=\emptyset$\,. Then, there exist $\hat x\in\partial F$\,,
$\tilde a\in\tilde A''$ 
and $\hat a
\in\tilde A''$ such that
$\Pi(\hat x,\tilde a)=1$\,,
$\Pi(\hat a,\hat x)=\Pi(\hat a,
F)$
and $\Phi_{\mathbf \Pi}(
F,
F^c)=\Phi_{\mathbf \Pi}(A',F^c)=\Phi_{\mathbf \Pi}(A',\hat x)
=\Phi_{\mathbf \Pi}(
F^c,
F)=\Phi_{\mathbf \Pi}(A'',F)=\Phi_{\mathbf \Pi}(A'',\hat x)=
\mathbf\Pi(\hat a)\Pi(\hat a,\hat x)=\mathbf\Pi(\hat x)$\,.
\item If $\tilde A''=\emptyset$\,, then
$\Phi_{\mathbf \Pi}(A'',A')=\Phi_{\mathbf \Pi}(A',A'')=\mathbf\Pi(A'')$\,.
  \end{enumerate}\end{lemma}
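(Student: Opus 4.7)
The strategy is to apply Lemma \ref{le:attractor} with $\Gamma = F$ (slightly enlarged around any isolated $A$-points lying on $\partial F$) and the refined partition $\{A \setminus \tilde A'', \tilde A''\}$ playing the roles of $\{A', A''\}$ in that lemma. I first establish the preliminary identification $A \cap F^c = \tilde A''$. For $a' \in A'$, \eqref{eq:1a} together with $\Pi(a',a') = 1$ gives $\Phi_\Pi(A', a') \ge \Pi(a') \ge \Phi_\Pi(A'', a')$, so $A' \subset F$; for $a'' \in A''$, $\Phi_\Pi(A'', a'') = \Pi(a'')$, and local finiteness (Remark \ref{re:norm}) makes the supremum in $\Phi_\Pi(A', a'')$ attained, so $\Pi(a'') > \Pi(a')\Pi(a', a'')$ for all $a' \in A'$ is equivalent to $\Phi_\Pi(A', a'') < \Phi_\Pi(A'', a'')$, i.e. to $a'' \in F^c$. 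Consequently $F^c \neq \emptyset$ iff $\tilde A'' \neq \emptyset$, as noted before the statement.

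For part 1, the hypotheses of Lemma \ref{le:attractor} under the refined partition are checked as follows: $\tilde A'' \subset \nt F^c$ from the strict inequality above at each $a'' \in \tilde A''$ and continuity of $\Phi_\Pi(A', \cdot),\Phi_\Pi(A'', \cdot)$ supplied by Lemma \ref{le:rep_1}; $A \setminus \tilde A'' \subset \nt F$ after the minor enlargement; the continuity condition $\Pi(a'', \nt F) = \Pi(a'', F)$ for dominant $a'' \in \tilde A''$ by part 4 of Lemma \ref{le:attraction} together with assumption {\bf 2.4}(5); and the strict inequality $\Phi_\Pi(\tilde A'', \nt F) > \Phi_\Pi(A \setminus \tilde A'', \cl F^c)$ by splitting the right-hand sup into pieces over $A'$ and $A'' \setminus \tilde A''$ and absorbing the latter into the former via the relation $\Pi(a'') \le \Pi(a')\Pi(a', a'')$ (available since $a'' \notin \tilde A''$) combined with \eqref{eq:9}. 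Lemma \ref{le:attractor} then supplies $\hat x \in \partial F$, $\hat a, \tilde a \in \tilde A''$ with $\Pi(\hat x, \tilde a) = 1$, $\Pi(\hat a, \hat x) = \Pi(\hat a, F)$, and $\Phi_\Pi(F, F^c) = \Phi_\Pi(F^c, F) = \Pi(\hat a)\Pi(\hat a, \hat x) = \Pi(\hat x)$. The rest of the chain follows by reusing the $\vee$-decomposition \eqref{eq:28}: the $A'' \setminus \tilde A''$ contributions to $\Phi_\Pi(F, F^c)$ collapse into $A'$ contributions, giving $\Phi_\Pi(F, F^c) = \Phi_\Pi(A', F^c)$, while continuity of $\Phi_\Pi(A', \cdot)$ and $\Phi_\Pi(A'', \cdot)$ on $\partial F$ forces $\Phi_\Pi(A', \hat x) = \Phi_\Pi(A'', \hat x) = \Pi(\hat x)$.

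For part 2, the hypothesis $\tilde A'' = \emptyset$ combined with \eqref{eq:68} propagates to $\Phi_\Pi(A'', x) \le \Phi_\Pi(A', x)$ for every $x$, so $F = \mathbb S$ and part 1 does not apply. The equality $\Phi_\Pi(A', A'') = \Pi(A'')$ is immediate: combine $\Pi(a'') \le \Pi(a')\Pi(a', a'')$ (from $\tilde A'' = \emptyset$) with $\Pi(a')\Pi(a', a'') \le \Pi(a'')$ (from \eqref{eq:1a} at $a''$). The upper bound $\Phi_\Pi(A'', A') \le \Pi(A'')$ follows from $\Pi(a'', A') \le 1$, and the matching lower bound is forced by max balance I (Lemma \ref{the:maxflow}) applied to $\Gamma$ a union of shrinking open neighbourhoods of the atoms of $A''$, passing to the limit using continuity of $\Pi$ and local finiteness of $A$.

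The main technical obstacle is the bookkeeping in the $\vee$-decompositions of part 1, in particular absorbing the $A'' \setminus \tilde A''$ contributions via the defining inequality of $\tilde A''$; the enlargement of $F$ around boundary $A$-points and the $\nt\Gamma$--$\cl\Gamma$ continuity condition are routine in comparison.
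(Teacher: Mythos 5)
Your overall strategy for part 1 is the same as the paper's (apply Lemma \ref{le:attractor} with the refined partition $\{\breve A', \tilde A''\} = \{A \setminus \tilde A'', \tilde A''\}$), but you elide the limiting step that the paper spends a substantial paragraph on, and it is not a formality. The source of the difficulty is that points of $\breve A'$ can lie on $\partial F$: if $a'' \in A'' \setminus \tilde A''$, the defining relation $\Pi(a'') = \sup_{a' \in A'}\Pi(a')\Pi(a',a'')$ may hold with equality, placing $a''$ on $\partial F$; likewise an $a' \in A'$ can sit on $\partial F$. So the hypothesis $\breve A' \subset \nt \Gamma$ of Lemma \ref{le:attractor} fails for $\Gamma = F$, and you must enlarge. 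But Lemma \ref{le:attractor} then delivers $\hat x_\Gamma \in \partial\Gamma$, \emph{not} $\hat x \in \partial F$, and $\Phi_\Pi(\cl\Gamma,\cl\Gamma^c)$, \emph{not} $\Phi_\Pi(F,F^c)$. Passing from the enlarged $\Gamma$ back to $F$ requires a compactness/net argument: the paper takes disjoint open neighbourhoods $\Gamma \supset F$, $\Gamma' \supset \tilde A''$, lets them shrink, extracts a convergent subnet of $\hat x_\Gamma$ (using the uniform lower bound on $\Pi(\hat x_\Gamma)$ from $\Phi_\Pi(\tilde A'',\breve A')>0$) to obtain $\hat x \in \partial F$ with the required properties, and then carefully identifies the monotone limits $\Phi_\Pi(\cl\Gamma,\Gamma^c) \to \Phi_\Pi(F,F^c)$, $\Phi_\Pi(\nt\Gamma^c,\Gamma)\to\Phi_\Pi(F^c,F)$, etc. Writing ``Lemma \ref{le:attractor} then supplies $\hat x \in \partial F$'' skips this.

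Your part 2 argument has a more serious problem. You prove $\Phi_\Pi(A',A'')=\Pi(A'')$ correctly, but the proposed derivation of the lower bound $\Phi_\Pi(A'',A')\ge\Pi(A'')$ via Lemma \ref{the:maxflow} with shrinking open neighbourhoods $\Gamma_\epsilon$ of $A''$ does not work. Writing out Lemma \ref{the:maxflow} gives $\Phi_\Pi(\Gamma_\epsilon,\Gamma_\epsilon^c)\le\Phi_\Pi(\cl\Gamma_\epsilon^c,\cl\Gamma_\epsilon)$. The left side does converge to $\Pi(A'')$ from below (since $\Pi(a'',\Gamma_\epsilon^c)\to1$ by continuity). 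But the right side is bounded above by $\Phi_\Pi(A,\cl\Gamma_\epsilon)$ (via \eqref{eq:1a} and \eqref{eq:38}), which \emph{also} converges to $\Pi(A'')$; in the limit you obtain the tautology $\Pi(A'')\le\Pi(A'')$ and learn nothing about $\Phi_\Pi(A'',A')$. The point is that the flux $\Phi_\Pi(\cl\Gamma_\epsilon^c,\cl\Gamma_\epsilon)$ picks up contributions from points of $\cl\Gamma_\epsilon^c$ that relay through $A''$ itself, and you cannot isolate the $A'\to A''$ contribution. The paper sidesteps this by invoking part 1 with $A'$ and $A''$ interchanged (when $\tilde A'\ne\emptyset$): that produces a concrete boundary point $\tilde x\in\partial\tilde F$ satisfying $\Pi(\tilde x, A')=1$, and it is precisely the identity $\Phi_\Pi(A'',\tilde x)=\Phi_\Pi(A'',\tilde x)\Pi(\tilde x,A')\le\Phi_\Pi(A'',A')$, combined with $\Phi_\Pi(A'',\tilde x)=\Pi(A'')$, that delivers the lower bound. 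Without such a witness $\tilde x$ there is no route from the balance inequality to $\Phi_\Pi(A'',A')\ge\Pi(A'')$. You also still need the trivial case $\tilde A'=\tilde A''=\emptyset$, where all four quantities coincide with $\Pi(A')=\Pi(A'')$.
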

\begin{proof}
The definitions of $\tilde A''$ and $  \breve A'$ imply that
\begin{equation}
  \label{eq:74}
  \mathbf\Pi(a'')>\mathbf\Pi(a')\Pi(a',a'')\text{ for all }a''\in\tilde A''\,, a'\in
  \breve A'\,.
\end{equation}
(Note that if $a'\in A''\setminus\tilde A''$\,, then $\mathbf\Pi(a')\le
\mathbf\Pi(\tilde a')\Pi(\tilde a',a')$, 
 for some $\tilde a'\in A'$\,, so that 
$\mathbf\Pi(a')\Pi(a',a'')\le
\mathbf\Pi(\tilde a')\Pi(\tilde a',a')\Pi(a',a'')
\le \mathbf\Pi(\tilde a')\Pi(\tilde a',a'')<\mathbf \Pi(a'')$\,.) 

Since 
$\Phi_{\mathbf \Pi}(A''\setminus\tilde A'',x)\le \Phi_{\mathbf \Pi}(A',x)\,,
$ for all $x$\,, we have that
$     F=\{x:\,\Phi_{\mathbf \Pi}(\breve A',x)\ge
\Phi_{\mathbf \Pi}(\tilde A'',x)\}\,.$
 By \eqref{eq:74},
 \begin{equation}
   \label{eq:22}
   \tilde A''\subset  F^c\,.
 \end{equation}

Let us show that $F\cap \cl \tilde A''=\emptyset$\,. 
 Suppose the opposite. Then, there exists sequence $a''_k\in \tilde A''$ that
converges to $x\in F$\,, as $k\to\infty$\,. Since the $a''_k$ form a
relative compact, they all must agree with $x$\,, for $k$ great enough, by the
local finiteness of $A$\,. Hence, $x\in \tilde A''$\,, so, 
 $x\notin F$\,.

Let  $\Gamma$ and $\Gamma'$ be disjoint  open
 neighbourhoods of $F$ and $\tilde A''$\,, respectively.
We have that
 $\breve A'\subset F\subset \Gamma=\nt \Gamma$ and
$\tilde  A''\subset \Gamma'\subset\nt \Gamma^c$\,.
Owing to $\Pi(x,y)$ being continuous and $\Gamma$ being open,
$\Pi(a,\nt\Gamma)=\Pi(a,\Gamma)=\Pi(a,\cl \Gamma)$\,, for all $a\in \tilde A''$\,.
Also, with the use of Lemma \ref{le:attraction}\,, 
$  \Phi_{\mathbf \Pi}(\tilde A'',\Gamma)=\Phi_{\mathbf \Pi}(\tilde A'',\partial \Gamma)>
\Phi_{\mathbf \Pi}(\breve A',\partial \Gamma)=
\Phi_{\mathbf \Pi}(\breve A', \Gamma^c)\,,$ where the inequality holds because
$\partial \Gamma\subset F^c$\,.
Thus, the set $\Gamma$ satisfies the hypotheses of Lemma
\ref{le:attractor}, with $\breve A'$ as $A'$ and $\tilde A''$ as $A''$\,.


Thanks to Lemma \ref{le:attractor}, there exist 
$\hat
x_\Gamma\in \partial \Gamma$ and $\hat a_\Gamma\in  \tilde A''$
such that $\Pi(\hat
x_\Gamma, \tilde A'')=1$\,,
$\Pi(\hat a_\Gamma,\hat x_\Gamma)=
\Pi(\hat a_\Gamma, \Gamma)$
and 
\begin{multline}
  \label{eq:39}
\mathbf\Pi(\hat x_\Gamma)=\mathbf\Pi(\hat a_\Gamma)\Pi(\hat
a_\Gamma,\hat x_\Gamma)=  \Phi_{\mathbf \Pi}(\cl \Gamma, \Gamma^c)=
\Phi_{\mathbf \Pi}(\nt \Gamma^c,  \Gamma)
=\Phi_{\mathbf \Pi}( \tilde A'', \Gamma)\\\ge
\Phi_{\mathbf \Pi}( \tilde A'',\breve A')>0\,,
\end{multline}
the latter inequality using that $\breve A'\not=\emptyset$\,, that
$\tilde A''\not=\emptyset$ and Remark \ref{re:put'}. 
The sets $\Gamma$ form a directed set by inclusion.
Accordingly, $\Gamma$\,, $\hat x_\Gamma$\,, $\hat a_\Gamma$ are nets.  
We look at their limits. The last inequality in \eqref{eq:39}
implies that the net
$\hat a_\Gamma$ contains only finitely many different points, so, we may
assume that there exists $\hat a\in  \tilde A''$ such that
$\Pi(\hat a,\hat x_\Gamma)=
\Pi(\hat a, \Gamma)$ and
\eqref{eq:39} holds with  $\hat a$ substituted for $\hat a_\Gamma$\,.
Also, the $\mathbf\Pi(\hat x_\Gamma)$ 
are bounded away from zero, which implies that
the $\hat x_\Gamma$ 
belong to a compact set, so, we may assume that $\hat x_\Gamma\to\hat
x\in \partial F$ and $\Pi(\hat a,\hat x_\Gamma)\to\Pi(\hat a,\hat x)$\,.
 By  \eqref{eq:39} and by $\mathbf \Pi(x)$ and  $\Pi(x,y)$ being
 continuous,
$\mathbf\Pi(\hat x)=\mathbf\Pi(\hat a)\Pi(\hat
a,\hat x)$\,. By the
convergence $\cl \Gamma\downarrow  F$\,,
$\Pi(\hat x, \tilde A'')=1$\,,
$\Pi(\hat a,\hat x)=
\Pi(\hat a, F)$
and $\mathbf\Pi(\hat a)\Pi(\hat a,\hat x)=\Phi_{\mathbf \Pi}
( \tilde A'', F)$\,.
Since $\Phi_{\mathbf \Pi}(\nt \Gamma^c, \Gamma)
\ge\Phi_{\mathbf \Pi}(\nt \Gamma^c, F)\uparrow \Phi_{\mathbf \Pi}( F^c, F)$
and $\Phi_{\mathbf \Pi}(\nt \Gamma^c, \Gamma)
\le \Phi_{\mathbf \Pi}( F^c, \Gamma)=\Phi_{\mathbf \Pi}( F^c,\cl \Gamma)\downarrow \Phi_{\mathbf \Pi}( F^c, F)$,
$\Phi_{\mathbf \Pi}(\nt \Gamma^c, \Gamma)\to
\Phi_{\mathbf \Pi}( F^c, F)$ so that 
$\Phi_{\mathbf \Pi}( F^c, F)=\mathbf\Pi(\hat x)$\,, see \eqref{eq:39}.
Since $\Phi_{\mathbf \Pi}( \tilde A'', \Gamma)=
\Phi_{\mathbf \Pi}( \tilde A'',\cl \Gamma)\downarrow 
\Phi_{\mathbf \Pi}( \tilde A'', F)$\,, by \eqref{eq:39} once again,
$\Phi_{\mathbf \Pi}( \tilde A'', F)=\Phi_{\mathbf \Pi}( F^c, F)\,.$
Similarly, $\Phi_{\mathbf \Pi}(\cl \Gamma, \Gamma^c)\ge
\Phi_{\mathbf \Pi}( F,(\cl \Gamma)^c)\uparrow \Phi_{\mathbf \Pi}( F,
F^c)$ and
$\Phi_{\mathbf \Pi}(\cl \Gamma, \Gamma^c)\le\Phi_{\mathbf \Pi}(
\cl \Gamma, F^c)\downarrow \Phi_{\mathbf \Pi}( F, F^c)$\,, so,
$\Phi_{\mathbf \Pi}(\cl \Gamma, \Gamma^c)\to\Phi_{\mathbf \Pi}( F,
F^c)=
\mathbf\Pi(\hat x)$\,.
Since $\hat x\in\partial F$\,, 
$\Phi_{\mathbf \Pi}( A',\hat x)=
\Phi_{\mathbf \Pi}( A'',\hat
x)$\,. 
Hence (recall \eqref{eq:1a} and that $A=A'\cup A''$), $\mathbf\Pi(\hat
x)=\Phi_{\mathbf \Pi}(  A,\hat x)=\Phi_{\mathbf \Pi}(  A',\hat x)=
\Phi_{\mathbf \Pi}(  A'',\hat x)$\,.

Let us consider the case that $\tilde A''=\emptyset$\,. 
It follows that $\mathbf\Pi(a'')=\sup_{a'\in A'}\mathbf\Pi(a')\Pi(a',a'')$\,,  for
all $a''\in A''$\,, so,
$\mathbf\Pi(A'')=\sup_{a'\in A'}\mathbf\Pi(a')\Pi(a',A'')=\Phi_{\mathbf \Pi}(A',A'')$\,.
 Suppose that $\tilde
A'\not=\emptyset$\,.
 Then, in analogy to part 1, with the roles of $A''$ and $A'$
switched and  $\tilde F=\{x:\,\Phi_{\mathbf \Pi}(A'',x)\ge \Phi_{\mathbf \Pi}(A',x)\}$\,,
for some $\tilde x\in\partial \tilde F$ such that
$\Pi(\tilde x,A')=1$\,,
$\Phi_{\mathbf \Pi}(A'',\tilde x)=\Phi_{\mathbf \Pi}(A'',\tilde
F^c)=\Phi_{\mathbf \Pi}(A',\tilde F)$\,. Since $\Phi_{\mathbf \Pi}(A'',\tilde
F^c)\le \mathbf\Pi(A'')$ and $\Phi_{\mathbf \Pi}(A',\tilde F)\ge\Phi_{\mathbf \Pi}(A',A'')=
\mathbf\Pi(A'')$
(note that $A''\subset\tilde F$), we obtain that
$\Phi_{\mathbf \Pi}(A'',\tilde x)=\Phi_{\mathbf \Pi}(A',\tilde x)=\mathbf\Pi(A'')$\,.
Since $\Phi_{\mathbf \Pi}(A'',\tilde x)=\sup_{a''\in A''}\mathbf\Pi(a'')\Pi(a'',\tilde
x)\Pi(\tilde x,A')\le
\sup_{a''\in A''}\mathbf\Pi(a'')\Pi(a'',A')=\Phi_{\mathbf \Pi}(A'',A')$\,, 
$\Phi_{\mathbf \Pi}(A'',A')=\mathbf\Pi(A'')$\,.

Finally, if $\tilde A''=\emptyset$ and
$\tilde A'=\emptyset$\,, then
it is elementary that 
$\Phi_{\mathbf \Pi}(A',A'')=\Phi_{\mathbf
  \Pi}(A'',A')=\mathbf\Pi(A'')=
\mathbf\Pi(A')$\,. \end{proof}
\begin{remark}
  The proof implies that when $\tilde A''=\emptyset$\,, $\Pi(A'',\tilde x)=1$ so that $\Pi(A'',A')=1$\,.
\end{remark}
\begin{remark}
  \label{re:ustoich}
One can prove that if 
given  $a\in A$ and $x\not=a$\,, we have that
  $\Pi_{a,t}(x)<1$\,, for  $t>0$\,, then
 $\Pi(a,y)<1$\,, for all $y\not=a$\,. It is seen to imply that
 $\tilde
 A''\not=\emptyset$ (and $\tilde A'\not=\emptyset$).
\end{remark}
The next lemma shows that
the  max fluxes  between
$  A'$
and $  A''$
balance, which condition uniquely
specifies the $\mathbf\Pi(a)\,, a\in A$\,.
\begin{lemma}[Max balance II]
  \label{the:max-bal-g}
The max balance in \eqref{eq:56} holds.
Those equations along with the normalisation condition that 
$\mathbf\Pi(A)=1$ uniquely specify  $\mathbf\Pi(a),\,a\in A $\,.
\end{lemma}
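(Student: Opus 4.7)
The plan is to handle max balance via Lemma \ref{le:ravno}, splitting on whether $\tilde A''$ is empty, and to handle uniqueness by a Freidlin--Wentzell arborescence argument adapted to the locally finite set $A$.

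For max balance \eqref{eq:56}, fix a nontrivial partition $\{A',A''\}$ of $A$. If $\tilde A''=\emptyset$ then part 2 of Lemma \ref{le:ravno} yields $\Phi_\Pi(A',A'')=\Phi_\Pi(A'',A')=\Pi(A'')$ outright, and swapping roles handles the case $\tilde A'=\emptyset$ symmetrically. So assume $\tilde A''\neq\emptyset$. Part 1 of the lemma then delivers $\hat x\in\partial F$ (with $F$ as in \eqref{eq:34}) and $\tilde a\in\tilde A''$ with $\Pi(\hat x,\tilde a)=1$, together with a common value $v:=\Pi(\hat x)=\Phi_\Pi(F,F^c)=\Phi_\Pi(F^c,F)=\Phi_\Pi(A',F^c)=\Phi_\Pi(A'',F)=\Phi_\Pi(A',\hat x)=\Phi_\Pi(A'',\hat x)$. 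The lower bound $\Phi_\Pi(A',A'')\geq v$ is then immediate from \eqref{eq:38}: for each $a'\in A'$, $\Pi(a',\hat x)=\Pi(a',\hat x)\Pi(\hat x,\tilde a)\leq\Pi(a',\tilde a)\leq\Pi(a',A'')$, whence $v=\Phi_\Pi(A',\hat x)\leq\Phi_\Pi(A',A'')$.

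For the matching upper bound $\Phi_\Pi(A',A'')\leq v$, split $A''=(A''\setminus\tilde A'')\cup\tilde A''$ with $A''\setminus\tilde A''\subset F$ and $\tilde A''\subset F^c$. On $A''\setminus\tilde A''$, the failure of the strict inequality defining $\tilde A''$ combined with \eqref{eq:1a} yields $\Phi_\Pi(A',a'')=\Pi(a'')$, so $\Phi_\Pi(A',A''\setminus\tilde A'')=\Pi(A''\setminus\tilde A'')\leq\Phi_\Pi(A'',F)=v$. On $\tilde A''$, expand $\Pi(a',a'')=\sup_z\Pi(a',z)\Pi(z,a'')$ via \eqref{eq:68} and split the supremum over $z\in F$ and $z\in F^c$: for $z\in F$, \eqref{eq:1a} gives $\Pi(a')\Pi(a',z)\leq\Pi(z)$ and then $\Pi(z)\Pi(z,a'')\leq\Phi_\Pi(F,F^c)=v$; for $z\in F^c$, $\Pi(a')\Pi(a',z)\leq\Phi_\Pi(A',F^c)=v$ and $\Pi(z,a'')\leq 1$. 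Either way the triple product is bounded by $v$. The equality $\Phi_\Pi(A'',A')=v$ follows by rerunning the same chain of reasoning with partition $\{A'',A'\}$ and associated set $\tilde F:=\cl F^c$; the analogue $\tilde v$ of $v$ produced there coincides with $v$ by Remark \ref{re:cont} together with the observation (recorded after \eqref{eq:34}) that $F$ equals the closure of its interior, so that $\tilde v=\Phi_\Pi(\cl F^c,\nt F)=\Phi_\Pi(F^c,F)=v$.

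For uniqueness, the plan is the Freidlin--Wentzell arborescence formula adapted to locally finite $A$. For each finite $A_0\subset A$ containing a given $a$, the max balance equations restricted to $A_0$ determine the ratios $\Pi(a)/\Pi(b)$ for $b\in A_0$ via $\Pi(a)\propto\max_g\prod_{(b,c)\in g}\Pi(b,c)$, the maximum being over $a$-arborescences $g$ on $A_0$. Remark \ref{re:norm} ensures that only finitely many $a\in A$ satisfy $\Pi(a)\geq\epsilon$ for any fixed $\epsilon>0$, so letting $A_0$ grow and imposing $\Pi(A)=1$ pins down all absolute values. The principal obstacle will be the upper bound $\Phi_\Pi(A',A'')\leq v$ in the case $\tilde A''\neq\emptyset$: it requires two simultaneous splits (of $A''$ across $F$, and of the intermediate state $z$ across $F$ and $F^c$ in the semigroup expansion), each calling on a different bound drawn from \eqref{eq:1a}, \eqref{eq:38}, or the explicit identifications of $v$ delivered by Lemma \ref{le:ravno}.
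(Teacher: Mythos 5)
Your max balance argument is essentially correct and even fills in details the paper leaves implicit: splitting $A''$ into $A''\setminus\tilde A''\subset F$ and $\tilde A''\subset F^c$, and expanding $\Pi(a',a'')$ via \eqref{eq:68} with a case analysis on the intermediate state, gives a clean upper bound, while the lower bound via $\Pi(\hat x,\tilde a)=1$ is fine. The one loose step is the identification of the set produced by \eqref{eq:34} after switching $A'$ and $A''$ with $\cl F^c$; in general that set is $\tilde F:=\{x:\Phi_\Pi(A'',x)\ge\Phi_\Pi(A',x)\}$, which can strictly contain $\cl F^c$ (they may differ on interior points where $\Phi_\Pi(A',\cdot)=\Phi_\Pi(A'',\cdot)$), so the chain $\tilde v=\Phi_\Pi(\cl F^c,\nt F)=\Phi_\Pi(F^c,F)$ is not literally available. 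The conclusion $\tilde v=v$ is nonetheless true and follows more directly: part 1 of Lemma \ref{le:ravno} gives $v=\Phi_\Pi(A',F^c)=\Phi_\Pi(A'',F)$ and $\tilde v=\Phi_\Pi(A'',\tilde F^c)=\Phi_\Pi(A',\tilde F)$, and since $\tilde F^c\subset F$ and $F^c\subset\tilde F$ one gets $\tilde v\le v$ and $v\le\tilde v$. The paper itself is terse here (it simply says to switch roles), so this is a patch rather than a change of route.

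The uniqueness argument is where there is a genuine gap. You invoke the finite Freidlin--Wentzell arborescence formula on arbitrary finite $A_0\subset A$ and then pass to a limit, but you never explain why the max balance equations for $\Pi$ on all of $A$ force the restricted system on $A_0$ that the arborescence uniqueness result requires. A partition of $A_0$ only extends to a partition of $A$ after the remaining attractors are placed on one side, and the resulting instance of \eqref{eq:56} then contains terms $\Pi(a)\Pi(a,\cdot)$ with $a\notin A_0$ which you have not controlled; so the ratios $\Pi(a)/\Pi(b)$, $a,b\in A_0$, are not determined by a self-contained finite system. In effect you state the conclusion of the finite case as if it applied. The paper's proof is a short contradiction argument in the spirit of Schneider and Schneider \cite{SchSch91}: assuming a second solution $\Pi'$ with $\Pi'(A)=1$, set $A'=\{a\in A:\Pi(a)>\Pi'(a)\}$ and $A''=A\setminus A'$, both nonempty if $A'\ne\emptyset$; local finiteness plus Remark \ref{re:put'} make $\Phi_{\Pi'}(A',A'')$ attained, hence strictly less than $\Phi_\Pi(A',A'')$, while $\Pi'\ge\Pi$ on $A''$ gives $\Phi_{\Pi'}(A'',A')\ge\Phi_\Pi(A'',A')$; max balance for $\Pi$ and $\Pi'$ then yields $\Phi_\Pi(A',A'')>\Phi_\Pi(A',A'')$. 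You should replace the arborescence passage by this argument, keeping the arborescence formula as the explicit solution (as in Lemma \ref{le:reshenie}) rather than as the uniqueness mechanism.
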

\begin{proof}
Let us suppose that $\tilde A''\not=\emptyset $ and
$\tilde A'\not=\emptyset $\,. 
  By Lemma \ref{le:ravno},
 it suffices to prove that
\begin{subequations}
  \begin{align}
  \label{eq:35}
\Phi_{\mathbf \Pi}( F, F^c)=\Phi_{\mathbf \Pi}(A', A'')\intertext{and
    that}
\Phi_{\mathbf \Pi}( F^c, F)=\Phi_{\mathbf \Pi}(A'', A')\,.\label{eq:35a}
\end{align}
\end{subequations}
Since $\tilde A''\not=\emptyset$\,, 
by part 1 of Lemma \ref{le:ravno}, using the notation of Lemma \ref{le:ravno},
\begin{equation*}
\Phi_{\mathbf \Pi}( F, F^c)=
\Phi_{\mathbf \Pi}(A',\hat x)=
\sup_{a\in A'}
\mathbf\Pi(a)\Pi(a,\hat x)\Pi(\hat x,\tilde a)
\le 
\sup_{a\in A'}
\mathbf\Pi(a)\Pi(a,\tilde a)
\le 
\Phi_{\mathbf \Pi}(A',\tilde A'')\,. \end{equation*}
On the other hand, as  $A'\subset F$ 
 and
$\tilde A''\subset F^c$ (see \eqref{eq:22}),
$\Phi_{\mathbf \Pi}(A',\tilde A'')
\le \Phi_{\mathbf \Pi}(F, F^c)$\,.
Relation \eqref{eq:35} has been proved, \eqref{eq:35a} is proved
similarly by switching the roles of $A''$ and $A'$ and using that
$\tilde A'\not=\emptyset$\,.
The case that $\tilde A''=\emptyset$ follows from part 2 of
 Lemma \ref{le:ravno}\,.
The case that $\tilde A'=\emptyset$ is dealt with analogously.

In order to prove the uniqueness of the restriction of $\mathbf\Pi$ to $A$\,,
let us suppose that,  for some deviability $\mathbf\Pi'$\,, 
the analogue of \eqref{eq:56} holds too.
Let $ A'=\{a\in A:\,\mathbf\Pi(a)>\mathbf\Pi'(a)\}$ and 
suppose that
$ A'\not=\emptyset$\,.
We have that $ A''=A\setminus A'\not=\emptyset$ because
$\mathbf\Pi'(A)=1$\,. 
Since
$\mathbf\Pi'(a)$ is small provided $a$ is outside of a certain compact and $A$
is locally finite,
$\sup_{a\in A'}\mathbf\Pi'(a)\Pi(a,A'')$ can be taken over a finite set, so, it is
attained. Hence, $
\Phi_{\mathbf \Pi}(A', A'')>\Phi_{\mathbf\Pi'}(A',
 A'')$\,. By the fact that $\mathbf\Pi'(a)\ge \mathbf\Pi(a)$ on $A''$ and  max balance,
$\Phi_{\mathbf\Pi'}(A'',A')\ge\Phi_{\mathbf \Pi}( A'',
 A')=\Phi_{\mathbf \Pi}(A', A'')\,,
$ so, $\Phi_{\mathbf \Pi}(A', A'')>\Phi_{\mathbf \Pi}(A', A'')$\,.
The contradiction proves that   $ A'=\emptyset$\,.
 \end{proof}
\begin{remark}
  The uniqueness proof draws on the proof of Theorem 1 in Schneider
  and Schneider \cite{SchSch91}.
\end{remark}
 Theorem \ref{the:vf} follows from Lemma
\ref{the:max-bal-g} and \eqref{eq:1a} of Lemma \ref{le:rep_1}.

The solution to equations \eqref{eq:56}
 is provided in Freidlin  and Wentzell \cite{wf2}.
Given $a\in A $\,, let $G_A(a)$ denote the set  of directed graphs
 that are in-trees with root $a$
on the vertex set $A$\,. Thus, for every
$a'\in A$\,, there is a unique directed path from $a'$ to
$a$ in $G_A(a)$\,. For $g\in
G_A(a)$\,, we let $E(g)$ denote the set of edges of $g$\,. Each edge
$e=(a',a'')\in E(g)$ 
is assigned the weight $v(e)=\Pi(a',a'')$\,. We let
$w(g)=\prod_{e\in E(g)}v(e)$\,. (If the set $E(g)$ is uncountable,
the latter product is defined as the infimum of finite products.)
\begin{lemma}
  \label{le:reshenie}
For $a\in A$\,,
\begin{equation*}
  \mathbf\Pi(a)=\frac{ \sup_{g\in
      G_A(a)}w(g)}{\sup_{a'\in A }
\sup_{g\in G_A(a')}w(g)}\,.
\end{equation*}
\end{lemma}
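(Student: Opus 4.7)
The plan is to reduce the claim to the uniqueness portion of Lemma \ref{the:max-bal-g}. Set
\[
\hat\Pi(a)=\sup_{g\in G_A(a)}w(g),\qquad Z=\sup_{a'\in A}\hat\Pi(a'),
\]
and observe that the claimed formula is $\Pi(a)=\hat\Pi(a)/Z$. By Lemma \ref{the:max-bal-g}, the restriction of $\Pi$ to $A$ is uniquely determined by the max balance equations \eqref{eq:56} and the normalisation $\Pi(A)=1$. Since $\hat\Pi(a)/Z$ trivially satisfies $\sup_a\hat\Pi(a)/Z=1$, the task reduces to verifying
\[
\sup_{a'\in A'}\hat\Pi(a')\,\Pi(a',A'')=\sup_{a''\in A''}\hat\Pi(a'')\,\Pi(a'',A')
\]
for every nontrivial partition $\{A',A''\}$ of $A$.

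I would prove this identity by the tropical analogue of the classical Markov chain tree manipulation. For $a'\in A'$ and $a''\in A''$, attaching the directed edge $(a',a'')$ to any $g\in G_A(a')$ produces a \emph{functional graph} $F$ on $A$ (out-degree one at every vertex) whose unique directed cycle passes through $a'$ and $a''$, and $w(F)=w(g)\,\Pi(a',a'')$. Conversely, given any such functional graph $F$ and any vertex $b$ on its cycle, deleting the unique outgoing edge of $b$ recovers an in-tree in $G_A(b)$. Under this correspondence the left-hand side above equals the supremum of $w(F)$ over functional graphs whose cycle contains at least one edge directed from $A'$ into $A''$, and the right-hand side equals the same supremum over cycles containing at least one edge from $A''$ into $A'$. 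The elementary observation that a directed cycle meeting both $A'$ and $A''$ must contain edges in both crossing directions identifies these two classes of functional graphs, so the two suprema are equal.

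The main obstacle is that $A$ may be countably infinite, so $G_A(a)$ consists of in-trees on an infinite vertex set with infinite edge sets and $w(g)$ is defined as the infimum of finite subproducts. I would handle this by carrying out the combinatorial edge-swap on an exhausting sequence of finite subsets $A_N\uparrow A$ and then passing to the limit, using local finiteness of $A$, continuity and upper compactness of $\Pi(\cdot,\cdot)$, and the tightness condition {\bf 2.5} to argue that contributions from vertices outside $A_N$ are negligible in both suprema. A secondary technical point is that applying Lemma \ref{the:max-bal-g} requires $\hat\Pi/Z$ to enjoy the properties of the restriction of a deviability to $A$, which follows from the local finiteness of $A$ together with Remark \ref{re:put'}, the latter guaranteeing that $\hat\Pi(a)>0$ and hence $Z>0$, so that the ratio is well defined.
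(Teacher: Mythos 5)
Your proposal is correct and follows essentially the same route as the paper: both reduce the claim to verifying the max-balance identity \eqref{eq:19} for the tree formula and then prove that identity by the edge-swap argument (inserting $(a',a'')$ into $g'\in G_A(a')$ and deleting a crossing edge $(a''',a^{iv})$ on the resulting cycle, yielding $g''\in G_A(a''')$ with $w(g')\Pi(a',a'')=w(g'')\Pi(a''',a^{iv})$). Your functional-graph packaging of this bijection and your exhaustion treatment of infinite $A$ are somewhat more explicit than the paper's terse edge-swap, but the underlying combinatorial idea is identical.
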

\begin{proof}
  Let $\{ A',\,  A''\}$ be a partition of
  $ A $\,. For  \eqref{eq:56}, it suffices to prove that
  \begin{equation}
    \label{eq:19}
    \sup_{a'\in A'}\sup_{a''\in A''}
\sup_{g\in
  G_{A}(a')}w(g)\Pi(a',a'')=
    \sup_{a'\in A'}\sup_{a''\in A''}
\sup_{g\in G_A(a'')}w(g)\Pi(a'',a')\,.
  \end{equation}
Let $g'\in G_A(a')$\,. Let
$(a''',a^{iv})$ be an edge on the path from
$a''\in A''$ to $a'\in A'$ in $g$ such that
$a'''\in  A''$ and $a^{iv}\in A'$\,. 
Let $g''$ represent the graph that is obtained from 
$g$ by inserting the edge $(a',a'')$ and
deleting the edge $(a''',a^{iv})$\,. We have
that $g''\in G_A(a''')$ and 
$w(g')\Pi(a',a'')
=w(g'')\Pi(a''',a^{iv})$\,.
Thus, each product on the lefthand side of \eqref{eq:19} is present on the righthand
side.
\end{proof}
\begin{remark}
  \label{re:ust}
There exists an  in-tree  $\overline g$
     with root $a$ on some subset of $A$ as the vertex set and no
     edges $(a',a'')$ with $\Pi(a',a'')=1$ such that $\sup_{g\in
        G_{A}(a)}w(g)=w(\overline g)$\,.
Indeed, if $g\in G_A(a)$ contains an edge $(a',a'')$ with $\Pi(a',a'')=1$\,,
let graph $g'\in G_A(a)$ be obtained from $g$
by replacing  each edge $(\tilde a,a')$ (with $a'$ as a terminal point)
 with the edge $(\tilde a,a'')$ (with $a''$ as a terminal point).
Then,  $\Pi(\tilde a,a')=\Pi(\tilde a,a')
\Pi(a',a'')\le \Pi(\tilde a,a'')$\,, so, $w(g)\le w(g')$\,.
With $\overline g$ being
obtained from $g'$ by deleting the edge $(a',a'')$\,, 
$w(\overline g)=w(g')$\,.  \end{remark}
\section{Diffusions with jumps}
\label{sec:diff-with-jumps}

In this section, we establish an LDP for the stationary distributions of
 jump diffusions.
Let us assume that  $X^n=(X^n_t,\,t\ge 0)$ 
is a semimartingale on a stochastic basis
$(\Omega,\mathcal{F},\mathbb{F}^n=(\mathcal{F}^n_t)_{t\ge 0},\mathbf P)$
that is a weak solution to the  equation 
\begin{equation}\label{eq:1.1}
X^n_t=x^n+\int_0^tb(X^n_s)ds+\frac{1}{\sqrt{n}}\int_0^t\sigma(X^n_s)dW^n_s
+ \frac{1}{n}\int_0^t\int_{\mathbb G}f(X^n_{s-},u)\bl(\mu^n(ds,du)-n\,
ds
\, \nu(du)\br),
\end{equation}
where $x^n\in \R^d$, $b(y)$ is an $\R^d$--valued Borel measurable
function,
$\sigma(y)$ is an $\R^{d\times m}$--valued Borel measurable function, 
$W^n_t$ is an $\R^m$--valued standard Wiener process,
$(\mathbb G,\mathcal{G})$ is a measurable space,
$f(y,u)$ is an $\R^d$--valued Borel measurable function, 
$\mu^n(ds,du)$ is a Poisson  random measure on $\R_+\times
\mathbb G$ with 
compensator $n\,ds\,\nu(du)$\,,
 $\nu $ being a $\sigma$--finite measure on $(\mathbb G,\mathcal{G})
   $\,, for the definitions, see, e.g., Gihman and Skorohod \cite{GihSko82}, Ikeda and Watanabe \cite{IkeWat}.

We assume 
the following conditions:
\begin{enumerate}
\item
there exists $C>0$  such that 
$\abs{b(y)}^2+\norm{\sigma(y)^\ast}^2
+\int_{\mathbb G}\abs{f(y,u)}^2\,\nu(du)\le C(1+\abs{y}^2)$\,,
with $^\ast$ denoting transpose,
\item the
functions $b(y)$ and $\sigma(y)$ are continuous and
 $\int_{\mathbb G}\abs{f(y',u)-f(y,u)}^2\,\nu(du)\to0$ when $y'\to y$\,.
\end{enumerate} 
Under those hypotheses, \eqref{eq:1.1} has  a  weak
solution, see Theorem 1 on p.357 in 
Gihman and Skorohod \cite{GihSko82}.
For   uniqueness,  one may require, in addition, that either
$b(y)$\,, $\sigma(y)$\,, and $f(y,u)$ 
be bounded and that $ \sigma(y)\sigma(y)^\ast$ be positive definite or 
that  the coefficients satisfy 
local Lipschitz continuity conditions, see
  Gihman and Skorohod \cite{GihSko82},
Ikeda and Watanabe \cite[Chapter 4]{IkeWat}, 
 Jacod and Shiryaev \cite[Chapter
III]{MR2003j:60001} and references therein.
 We do not require weak
uniqueness. Nevertheless, a certain nondegeneracy condition is
needed for other purposes. 
Specifically, we assume that
the matrix 
$ \sigma(y)\sigma(y)^\ast
$ is positive definite, for all $y$\,.

For a trajectorial LDP, we require that there
exists an $\R_+$--valued Borel function $h(u)$ such that
\begin{equation*}
  \int_{\mathbb G} (e^{\rho h(u)}-1-\rho h(u))\,\nu(du)<\infty\,,
\end{equation*}
for all $\rho\ge0$\,, and that
\begin{equation}
  \label{eq:67}
  \abs{f(y,u)}\le h(u)(1+\abs{y})\,.
\end{equation}
Suppose, in addition, that 
\begin{equation}
  \label{eq:24}
      \nu(u:\,e_i\cdot f(y,u)>0)>0\,,\nu(u:\,e_i\cdot f(y,u)<0)>0
\text{ for all }y\in\R^d\,,i\in\{1,2,\ldots,d\}\,,
\end{equation}
where $e_i$ denotes the $i$th vector of the standard basis in
$\R^d$\,, and that the following condition holds:
\begin{trivlist}
\item[(C)] for every $y\in\R^d$\,,
 there exists
  $\gamma>0$ 
 such that 
 \begin{equation*}
M=
\sup_{y':\,\abs{y-y'}\le\gamma} \sup_{u\in\mathbb G}\abs{f(y',u)-f(y,u)}<\infty\,.
    \end{equation*}
\end{trivlist}
If  $x^n\to x$\,, then   the sequence
$X^n$ obeys the LDP in $\D(\R_+,\R^d)$ with deviation function
\begin{equation}
  \label{eq:50}
    I_{x}(X)=\int_0^\infty\sup_{\lambda\in\R^d}(\lambda\cdot (\dot X_t-b(X_t))
-\frac{1}{2}\,\abs{\sigma(X_t)^\ast
\lambda}^2-
\int_{\mathbb G}\bl(e^{\lambda\cdot f(X_t,u)
    }-1-\lambda\cdot f(X_t,u)\br)\nu(du))\,dt\,,\end{equation}
provided $X=(X_t\,,t\ge0)$ is absolutely continuous
and $X_0=x$ and $I_{x}(X)=\infty$\,, otherwise.
 In some more detail, the LDP  follows from
Theorem 5.4.3 on p.419 in Puhalskii
\cite{Puh01} by   observing that
the predictable characteristics of $X^n$
without truncation, as defined, e.g., on p.292 in Puhalskii
\cite{Puh01},
are of the form  \begin{align*}
  B^n_t&=\int_0^tb(X_s^n)\,ds\,,
\\C^n_t&=\frac{1}{n}\,\int_0^t\sigma(X_s^n)\sigma(X^n_s)^\ast\,ds\,,
\\\nu^n((0,t], \Gamma )&=n\int_0^t
\int_{\mathbb G}\mathbf1_ {\Gamma\setminus\{0\}}\bl(\frac{f(X^n_s,u)}{n}\br)\nu(du)\,ds\,,
\end{align*}
where $\Gamma\in\mathcal{B}(\R^d)$\,,
so that the local characteristics, as on p.415 in Puhalskii
\cite{Puh01}, are given by
\begin{equation*}
  b^n_s(y)=b(y)\,, c^n_s(y)=\sigma(y)\sigma(y)^\ast\,,
  \nu^n_s( \Gamma , y)=\int_{\mathbb G}\mathbf1_{ \Gamma\setminus\{0\} }(f(y,u))\nu(du)\,.
\end{equation*}
In order to ensure the uniqueness of the idempotent distribution
$\mathcal{L}_i(X)$ required in the statement of 
Theorem 5.4.3 on p.419 in Puhalskii
\cite{Puh01}, one notes that, under condition (C),
 for $v>0$\,, $\eta>0$\,, $B>M$ and $\lambda\in\R^d$ with $\abs{\lambda}=1$\,,  if $\abs{y-y'}\le\gamma$
and 
$\eta\le 1-M/B$\,, then
\begin{equation}
  \label{eq:25}
    \int_{\mathbb G}e^{\eta v\lambda\cdot f(y,u)}\ind_{\{\lambda\cdot
    f(y,u)>B\}}\nu(du)
\le
  \int_{\mathbb G}e^{ v\lambda\cdot f(y',u)}
\ind_{\{\lambda\cdot f(y',u)>B-M\}}\nu(du)\,,
\end{equation}
and invokes Theorem 2.8.33 on p.244 
in Puhalskii \cite{Puh01} in analogy with the proof of
 Theorem 2.8.34 on p.245 in 
Puhalskii \cite{Puh01}.
 (In fact, it would be possible to merely refer to
the latter theorem,
as mentioned in the statement of 
Theorem 5.4.3 on p.419 in Puhalskii
\cite{Puh01}, if it allowed a different constant  to $B$\,, say $B_1$\,,
in the denominator  in condition 3b). That such a substitution
 is possible follows from the proof of 
Theorem 2.8.34 on p.245 in 
Puhalskii \cite{Puh01}. One would need to use the new version
with $B_1=B-M$\,.)

 By a standard argument, 
the function $I_x(X)$ is
lower semicontinuous in $X$\,. Since $I_x(X)=\infty$ unless
$X_0=x$\,,
$I_x(X)$  is lower
semicontinuous in $(x,X)$\,.
One can see that if  $I_x(X)$ is bounded above on  a set of $(x,X)$, 
then the functions
$X$ are locally equicontinuous uniformly in $x$\,. It follows that the
set $\cup_{x\in K}\{X:\,I_x(X)\le \gamma\}$ is  compact in
$\mathbb C(\R_+,\R^d)$ for any compact $K\subset \R^d$ and any
$\gamma\ge 0$\,. 
One checks  that  $
I_x(X)=0$ if and only if $X_0=x$ and
\begin{equation}
  \label{eq:52}
  \dot X_t=b(X_t)\,.
\end{equation}
We assume that there exists a locally finite collection 
$A$ of equilibria of \eqref{eq:52} which has a nonempty intersection
with the $\omega$--limit set of every $x\in\mathbb S$ and that
the function $b(x)$ is bounded on some neighbourhood of $A$\,.

Furthermore, we assume  that   $X^n$ admits  invariant measure
$\pi^n$\,. The latter property holds under various sets of
hypotheses, see, e.g., Masuda \cite{Mas07,Mas08},
Qiao \cite{Qia14}, Xie and Zhang \cite{XieZha20}.
A drift condition is commonly required.
We assume the following:
\begin{equation}
  \label{eq:62}
  y\cdot b(y)\le
-\kappa\abs{y}^2\,, \text{ provided $\abs{y}$ is great enough,
for some $\kappa>2C+4\int_{\mathbb G}h(u)^2e^{4h(u)}\nu(du)$\,.}
\end{equation} 

Let us   check assumptions {\bf 2.1 -- 2.4}
with $\mathbf\Pi_x(X)=e^{-I_x(X)}$\,.
We have already verified assumptions {\bf 2.1} and {\bf 2.2}.
Let us check {\bf 2.3}. By \eqref{eq:50},
\begin{equation*}
  \inf_{X'\in\pi_s^{-1}(\pi_sX)}I_x(X')=\int_0^s\sup_{\lambda\in\R^d}(\lambda\cdot (\dot X_t-b(X_t))
-\frac{1}{2}\,\abs{\sigma(X_t)^\ast
\lambda}^2-
\int_{\mathbb G}\bl(e^{\lambda\cdot f(X_t,u)
    }-1-\lambda\cdot f(X_t,u)\br)\nu(du))dt
\end{equation*}
so that
\eqref{eq:3} holds. 
 In {\bf 2.4}, only conditions (4) and (5)  are not evident.
Given $x,x'\in\R^d$\,, let $X_t=x+t(x'-x)/\abs{x'-x}$\,.
We have that $X_0=x$\,, $X_{\abs{x'-x}}=x'$ and
\begin{multline*}
  \int_0^{\abs{x'-x}}\sup_{\lambda\in\R^d}(\lambda\cdot (\dot X_t-b(X_t))
-\frac{1}{2}\,\abs{\sigma(X_t)^\ast
\lambda}^2-
\int_{\mathbb G}\bl(e^{\lambda\cdot f(X_t,u)
    }-1-\lambda\cdot f(X_t,u)\br)\nu(du))\,dt\\
\le  \int_0^{\abs{x'-x}}\sup_{\lambda\in\R^d}(\lambda\cdot (\dot X_t-b(X_t))
-\frac{1}{2}\,\abs{\sigma(X_t)^\ast
\lambda}^2)\,dt\,,
\end{multline*}
which implies parts (4) and (5) in {\bf 2.4}. (Incidentally, the
hypotheses of  part (3) can be
checked similarly.)
 
We precede the check of {\bf 2.5} with a proof of the exponential tightness
of the $\pi^n$\,.
\begin{lemma}
  The sequence $\pi^n$ is exponentially tight of order $n$\,.
\end{lemma}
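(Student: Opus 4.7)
The plan is a Foster--Lyapunov argument with the family $V_n(y)=(1+|y|^2)^{\beta n}$ for a suitably small $\beta\in(0,1]$. Once I have a drift inequality of the form $\mathcal L^n V_n(y)\le -cnV_n(y)$ for $|y|\ge R$, with $c>0$ and $R>0$ independent of $n$, the invariance of $\pi^n$ (justified for the unbounded $V_n$ through the stopping times $\tau_M=\inf\{t:V_n(X^n_t)\ge M\}$ and a monotone passage in $M$) forces $\int V_n\,d\pi^n\le K(1+R^2)^{\beta n}$. Chebyshev's inequality then yields $\pi^n(\{y:|y|>r\})\le K((1+R^2)/(1+r^2))^{\beta n}$, so $\limsup_n\pi^n(\{y:|y|>r\})^{1/n}\le ((1+R^2)/(1+r^2))^\beta\to 0$ as $r\to\infty$, which is exactly exponential tightness of order $n$.

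For the drift inequality I would compute $\mathcal L^n V_n$ from the infinitesimal generator associated with \eqref{eq:1.1}. Writing $V_n=e^{\beta n\varphi}$ with $\varphi(y)=\log(1+|y|^2)$ and hence $\nabla\varphi(y)=2y/(1+|y|^2)$, the drift contribution to $\mathcal L^n V_n/(nV_n)$ is bounded by $-2\beta\kappa|y|^2/(1+|y|^2)$ via \eqref{eq:62}, and the continuous diffusion contribution by $2\beta^2 C|y|^2/(1+|y|^2)$ via the growth bound on $\sigma$, plus an $O(1/n)$ trace correction that is uniformly small.

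The crux is the jump integrand $\int[e^{\beta n(\varphi(y+f/n)-\varphi(y))}-1-\beta\nabla\varphi\cdot f]\,\nu(du)$. I would use the elementary inequality $(1+\theta)^{\beta n}\le e^{\beta n\theta}$ on the ratio $1+\theta=(1+|y+f/n|^2)/(1+|y|^2)$ to dominate the integrand by $e^{A+B}-1-A$, where $A=2\beta\,y\cdot f/(1+|y|^2)$ and $B=\beta|f|^2/(n(1+|y|^2))$. The uniform bounds $|A|\le 4\beta h(u)$ and $B\le 2\beta h(u)^2/n$, which follow from \eqref{eq:67} via the elementary estimates $|y||f|/(1+|y|^2)\le 2h(u)$ and $|f|^2/(1+|y|^2)\le 2h(u)^2$, then give a dominating function of the form $8\beta^2 h(u)^2 e^{4\beta h(u)}+O(1/n)$, which is $\nu$-integrable since $\int(e^{\rho h}-1-\rho h)\,d\nu<\infty$ for every $\rho\ge 0$.

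Letting $|y|\to\infty$ and collecting, the leading coefficient of $\mathcal L^n V_n/(nV_n)$ is bounded by $-2\beta\kappa+2\beta^2 C+8\beta^2\int h^2 e^{4\beta h}\,d\nu$. The hypothesis $\kappa>2C+4\int h^2 e^{4h}\,d\nu$ then permits a choice of $\beta\in(0,1]$ (such as $\beta=1$) that makes this quantity strictly negative, delivering the drift inequality outside a compact set; on that compact set $\mathcal L^n V_n$ is controlled from the same calculation. The two main obstacles I foresee are: (i) tracking all constants in the jump estimate tightly enough that the stated hypothesis on $\kappa$ actually suffices; and (ii) rigorously executing the invariance step for the unbounded $V_n$, which I would handle through the localisation by $\tau_M$ and a Fatou passage.
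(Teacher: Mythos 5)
Your proposal is correct and follows essentially the same route as the paper: both use an order-$n$ polynomial Lyapunov function (the paper works with $|y|^{2n}$ via It\^o's formula rather than your smoothed $(1+|y|^2)^{\beta n}$ via the generator, but the jump estimate via Taylor's formula and the resulting $e^{4h(u)}$ factor are the same), combine it with the drift condition \eqref{eq:62} to get a linear differential inequality in $n$, and finish by exponential Markov. Your explicit localisation/Fatou step to justify the drift bound $\int V_n\,d\pi^n\lesssim K(1+R^2)^{\beta n}$ is sound and, if anything, spells out a passage that the paper leaves implicit when it moves from the estimate $\limsup_t\mathbf{E}\abs{X^n_t}^{2n}\le R^{2n}C_1/C_2$ (for a deterministic start) to the exponential tightness of $\pi^n$.
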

\begin{proof}
Let us prove that  
\begin{equation}
  \label{eq:66}
  \lim_{L\to\infty}\limsup_{n\to\infty}\limsup_{t\to\infty}\mathbf P(\abs{X^n_t}>L)^{1/n}=0\,.
\end{equation}
Let
\begin{equation*}
  M^n_t=\frac{1}{\sqrt{n}}\int_0^t\sigma(X^n_s)dW^n_s
+ \frac{1}{n}\int_0^t\int_{\mathbb G}f(X^n_{s-},u)\bl(\mu^n(ds,du)-n\, ds\, \nu(du)\br)\,.
\end{equation*}
It is a locally square integrable martingale with
$C^n_t$ as the predictable
quadratic variation process of the continuous part.
As in Liptser and Pukhalskii \cite{lp}, we look at $\abs{X^n_t}^{2n}$\,.
By It\^o's lemma, see, e.g., p.57 in Jacod and Shiryaev
\cite{MR2003j:60001}, with tr standing for the trace of a matrix,
\begin{multline*}
\abs{X^n_t}^{2n}
=\abs{x^n}^{2n}+
\int_0^t2n\abs{X^n_s}^{2(n-1)} X^n_s \cdot b(X^n_s)\,ds+
\int_0^t2n\abs{X^n_{s-}}^{2(n-1)} X^n_{s-}\cdot dM^n_s\\+
\frac{1}{2n}\,\int_0^t
\text{tr}\bl(
2n2(n-1)\abs{X^n_s}^{2(n-2)} X^n_s{X^n_s}^\ast\sigma(X^n_s)\sigma(X^n_s)^\ast
+2n\abs{X^n_s}^{2(n-1)} \,\sigma(X^n_s)\sigma(X^n_s)^\ast\br)ds\\
+\sum_{0<s\le t}\bl(\abs{X^n_s}^{2n}-\abs{X^n_{s-}}^{2n}
-2n\abs{X^n_{s-}}^{2(n-1)} X^n_{s-}\cdot \Delta
X^n_s\br)\\
=\abs{x^n}^{2n}+
2n\int_0^t\abs{X^n_s}^{2(n-1)} X^n_s \cdot b(X^n_s)\,ds+
2n\int_0^t\abs{X^n_{s-}}^{2(n-1)} X^n_{s-}\cdot dM^n_s\\+
\int_0^t
\text{tr}\bl(2(n-1)\abs{X^n_s}^{2(n-2)} X^n_s{X^n_s}^\ast\sigma(X^n_s)\sigma(X^n_s)^\ast
+\abs{X^n_s}^{2(n-1)} \sigma(X^n_s)\sigma(X^n_s)^\ast\br)ds\\
+\int_0^t\int_{\mathbb G}\bl(\abs{X^n_{s-}+\frac{f(X^n_{s-},u)}{n}}^{2n}
-\abs{X^n_{s-}}^{2n}
-2n\abs{X^n_{s-}}^{2(n-1)} X^n_{s-}\cdot
\frac{f(X^n_{s-},u)}{n}\br)\mu^n(ds,du)\,.
\end{multline*}
By Taylor's formula, 
\begin{multline*}
\abs{X^n_{s-}+\frac{f(X^n_{s-},u)}{n}}^{2n}
-\abs{X^n_{s-}}^{2n}
-2n\abs{X^n_{s-}}^{2(n-1)} X^n_{s-}\cdot
\frac{f(X^n_{s-},u)}{n}\\
=
\int_0^1 (1-t)
\bl(\frac{4(n-1)}{n}\,\abs{X^n_{s-}+t\,\frac{ f(X^n_{s-},u)}{n}}^{2(n-2)}
\abs{(X^n_{s-}+t\,\frac{ f(X^n_{s-},u)}{n})\cdot
f(X^n_{s-},u)}^2\\+
\frac{2}{n}\,\abs{X^n_{s-}+t\,\frac{ f(X^n_{s-},u)}{n}}^{2n-2}
\abs{f(X^n_{s-},u)}^2\br)\,dt
\le 
2(\abs{X^n_{s-}}+\frac{\abs{ f(X^n_{s-},u)}}{n})^{2(n-1)}
\abs{f(X^n_{s-},u)}^2
\\\le 2(\abs{X^n_{s-}}\vee1)^{2(n-1)}e^{2\abs{
    f(X^n_{s-},u)}/(\abs{X^n_{s-}}\vee1)}
\abs{f(X^n_{s-},u)}^2\,.
\end{multline*}
By the linear growth condition \eqref{eq:67},
$\abs{
    f(X^n_{s-},u)}/(\abs{X^n_{s-}}\vee1)\le2 h(u)$\,.
It follows that, with $F_t\prec G_t$ meaning that $G_t-F_t$ is a
nondecreasing function,
\begin{multline*}
  \abs{X^n_t}^{2n}\prec\abs{x^n}^{2n}+
2n\int_0^t\abs{X^n_s}^{2(n-1)} X^n_s \cdot b(X^n_s)\,ds+
2n\int_0^t\abs{X^n_{s-}}^{2(n-1)} X^n_{s-}\cdot dM^n_s\\+
\int_0^t
\bl(
2(n-1)\abs{X^n_s}^{2(n-2)}\abs{\sigma(X^n_s)^\ast X^n_s}^2
+\abs{X^n_s}^{2(n-1)} \text{tr}(\sigma(X^n_s)\sigma(X^n_s)^\ast)\br)\,ds\\
+2\int_0^t\int_{\mathbb G}(\abs{X^n_{s-}}\vee1)^{2(n-1)}
\abs{f(X^n_{s-},u)}^2e^{4h(u)}\mu^n(ds,du)\,.
\end{multline*}
Hence,
\begin{multline*}
  \abs{X^n_t}^{2n}\prec\abs{x^n}^{2n}+
2n\int_0^t\abs{X^n_s}^{2(n-1)} X^n_s \cdot b(X^n_s)\,ds+
\int_0^t
\bl(
2(n-1)\abs{X^n_s}^{2(n-2)}\abs{\sigma(X^n_s)^\ast X^n_s}^2
\\+\abs{X^n_s}^{2(n-1)} \text{tr}(\sigma(X^n_s)\sigma(X^n_s)^\ast)\br)\,ds
+2n\int_0^t\int_{\mathbb G}(\abs{X^n_{s-}}\vee1)^{2(n-1)}
\abs{f(X^n_{s-},u)}^2e^{4h(u)}\nu(du)\,ds+\overline{M}^n_t\,,
\end{multline*}
where $\overline{ M}^n_t$ is a local martingale.
By \eqref{eq:62} and the linear growth conditions in 1. and \eqref{eq:67}, for some $C_1>0$\,, $C_2>0$ and $R>0$\,,
\begin{equation*}
    \abs{X^n_t}^{2n}\prec\abs{x^n}^{2n}+
nC_1R^{2n}t-nC_2\int_0^t\abs{X^n_s}^{2n}\,ds
+\overline{M}^n_t\,.
\end{equation*}
Let $\tau^n_k\,, k=1,2,\ldots,$ represent a localising sequence for $\overline M^n$\,. We have that
\begin{equation*}
      \abs{X^n_{t\wedge \tau^n_k}}^{2n}\prec\abs{x^n}^{2n}+
nC_1R^{2n}(t\wedge\tau^n_k)-nC_2\int_0^{t\wedge\tau^n_k}\abs{X^n_s}^{2n}\,ds
+\overline{M}^n_{t\wedge\tau^n_k}\,,
\end{equation*}
so,
\begin{equation*}
      \mathbf E\abs{X^n_{t\wedge \tau^n_k}}^{2n}\prec\abs{x^n}^{2n}+
nC_1R^{2n}\mathbf E(t\wedge\tau^n_k)-
nC_2\mathbf E\int_0^{t\wedge\tau^n_k}\abs{X^n_s}^{2n}\,ds\,.
\end{equation*}
On letting $k\to\infty$\,,
\begin{equation*}
    \mathbf E\abs{X^n_t}^{2n}\prec\abs{x^n}^{2n}+
nC_1R^{2n}t-nC_2\int_0^t\mathbf E\abs{X^n_s}^{2n}\,ds
\,.
\end{equation*}
Hence, 
\begin{equation*}
  \mathbf E\abs{X^n_t}^{2n}\le e^{-nC_2t}(x^n)^{2n}+R^{2n}\frac{C_1}{C_2}
\end{equation*}
so that
\begin{equation}
  \label{eq:59}
 \limsup_{n\to\infty} \limsup_{t\to\infty}(\mathbf
 E\abs{X^n_t}^{2n})^{1/n}\le R^2\,,
\end{equation}
implying \eqref{eq:66}.

By \eqref{eq:66} and exponential Markov's inequality,
\begin{equation*}
\lim_{L\to\infty}\limsup_{n\to\infty}\pi^n(x\in\R^d:\,
\abs{x}>L)^{1/n}=0\,,
\end{equation*}
 so, 
the sequence $\pi^n$ is exponentially tight of order $n$\,.
\end{proof}
The tightness of $\Pi_{x,t}$ required in {\bf 2.5} follows from \eqref{eq:59}: ''by
Fatou'' and the distributions of the $X^n_t$ LD converging to $\Pi_{x,t}$\,,
\begin{equation*}
\liminf_{n\to\infty}
\sup_{x\in K}\limsup_{t\to\infty}  \bl(\mathbf E\abs{  X^n_t}^{2n}
\br)^{1/n}\ge\sup_{x\in K}\limsup_{t\to\infty} \sup_{y\in\R^d}
  \abs{y}^2\Pi_{x,t}(y)\,,
\end{equation*}
 so, we obtain from \eqref{eq:59} that
  \begin{equation*}
\lim_{L\to\infty}    \sup_{x\in K}\limsup_{t\to\infty}\Pi_{x,t}(\abs{y}>L)\le
    \lim_{L\to\infty}\sup_{x\in K}\limsup_{t\to\infty}
L^{-2}\sup_{y\in \R^d}
  \abs{ y}^2\Pi_{x,t}(y)=0\,.
  \end{equation*}

 Let, for $a,\tilde a\in A$\,,
\begin{equation}
  \label{eq:13}
    I(a,\tilde a)=\lim_{T\to\infty}\inf_{\substack{X\in \C(\R_+,\R^d):\\
\,X_0=a,\,X_T=\tilde a}}I_{a}(X)\,,
\end{equation}
the limit existing by Lemma \ref{le:attraction}. Theorem
\ref{the:vf} yields the following result.
\begin{theorem}
\label{the:jump_diff_ld}  Under the stated hypotheses, the measures $\pi^n$ obey the LDP
in $\R^d$  for rate $n$   with deviation function $I(x)$\,, which is
  specified uniquely by the requirements that 
$I(x)=\inf_{a\in A}(I(a)+I(a,x))$\,, 
$\inf_{a\in A}I(a)=0$\,, and 
  \begin{equation*}
    \inf_{a'\in A'}\inf_{a''\in A''}\bl(I(a')+I(a',a'')\br)=
\inf_{a''\in A''}\inf_{a'\in A'}\bl(I(a'')+I(a'',a')\br)\,,
  \end{equation*}
for arbitrarily chosen partitions $\{A',A''\}$ of  $A$\,.
The $I(a)\,, a\in A$\,, can be calculated as follows:
\begin{equation*}
    I(a)=\inf_{g\in
      G_A(a)}\sum_{(a',a'')\in E(g)}I(a',a'')-\inf_{\tilde a\in A }
\inf_{g\in G_A(\tilde a)}\sum_{(a',a'')\in E(g)}I(a',a'')\,.
\end{equation*}
\end{theorem}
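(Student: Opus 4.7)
The plan is to obtain this theorem as a direct application of Theorem~\ref{the:vf} together with Lemma~\ref{le:reshenie}, after recognising that the entire preceding discussion in this section is precisely the verification of hypotheses {\bf 2.1}--{\bf 2.5} for the choice $\Pi_x(X)=e^{-I_x(X)}$ with $I_x(X)$ given by \eqref{eq:50}.

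First I would assemble the verifications already in hand: the trajectorial LDP via Theorem~5.4.3 of Puhalskii~\cite{Puh01} delivers {\bf 2.1}; the variational representation \eqref{eq:50}, together with continuity of $b$ and $\sigma$ and the growth bound \eqref{eq:67}, gives both upper semicontinuity of $\Pi_x(X)$ in $(x,X)$ and the uniform compactness of sublevel sets, hence {\bf 2.2}; additivity of \eqref{eq:50} over disjoint time intervals gives \eqref{eq:3}, hence {\bf 2.3}; parts (1)--(3) of {\bf 2.4} follow from $A$ being a locally finite set of equilibria of \eqref{eq:52} intersecting every $\omega$-limit set; parts (4) and (5) are the linear-segment bound computed in the excerpt, using local boundedness of $b$ near $A$; exponential tightness of order $n$ of $\pi^n$ has just been shown via \eqref{eq:66}; and {\bf 2.5} follows by Fatou from the uniform moment bound \eqref{eq:59}.

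Next I would invoke Theorem~\ref{the:vf}. Let $\Pi$ denote any LD limit point of $\pi^n$ extracted from exponential tightness. Theorem~\ref{the:vf} asserts that $\Pi$ satisfies the max balance \eqref{eq:56} and $\Pi(A)=1$, which by Lemma~\ref{the:max-bal-g} pin down $\Pi\vert_A$ uniquely; then $\Pi(x)=\Phi_\Pi(A,x)$ extends $\Pi$ to all of $\R^d$. Since the limit point is thereby unique, the full sequence $\pi^n$ LD converges to $\Pi$. Setting $I(x)=-\ln\Pi(x)$ and $I(a,\tilde a)=-\ln\Pi(a,\tilde a)$, the identity $\Pi(x)=\sup_{a\in A}\Pi(a)\Pi(a,x)$ from \eqref{eq:1a} becomes $I(x)=\inf_{a\in A}(I(a)+I(a,x))$, the normalisation $\Pi(A)=1$ becomes $\inf_{a\in A}I(a)=0$, and the max balance becomes the claimed infimum balance across any partition of $A$. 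The tree formula for $I(a)$, $a\in A$, is then the negative logarithm of Lemma~\ref{le:reshenie}.

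The point requiring the most care is that $I(a,\tilde a)=-\ln\Pi(a,\tilde a)$ agrees with the formula \eqref{eq:13}: this is exactly the content of part~2 of Lemma~\ref{le:attraction}, applied to $\Pi_x(X)=e^{-I_x(X)}$, since $\Pi_{a,T}(\tilde a)=\exp(-\inf\{I_a(X):X_0=a,X_T=\tilde a\})$ and this quantity is shown there to converge as $T\to\infty$, the convergence being from below along the ``visit $A$ in between'' approximation $\hat\Pi_{a,T}(\tilde a)$. A secondary bookkeeping issue is upgrading the subsequential LD convergence supplied by exponential tightness to full-sequence convergence, which is precisely where the uniqueness clause in Lemma~\ref{the:max-bal-g} is decisive, since it forces every LD limit point to have the same restriction to $A$ and hence to be the same deviability on all of $\R^d$ through $\Pi(x)=\Phi_\Pi(A,x)$.
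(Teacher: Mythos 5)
Your proposal matches the paper's proof: the section preceding the theorem verifies hypotheses \textbf{2.1}--\textbf{2.5} and exponential tightness of $\pi^n$ for $\Pi_x(X)=e^{-I_x(X)}$, and the theorem is then deduced by applying Theorem~\ref{the:vf} (which already contains the full-sequence LD convergence and the uniqueness via Lemma~\ref{the:max-bal-g}) together with Lemma~\ref{le:reshenie}, exactly as you lay out, with the identification $I(a,\tilde a)=-\ln\Pi(a,\tilde a)$ coming from Lemma~\ref{le:attraction} as you note.
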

\begin{remark}
In \eqref{eq:13},
 it can be assumed that $X_t$ follows \eqref{eq:52} for
$t\ge T$\,, so, the integration in \eqref{eq:50} can be stopped at $T$\,. Also,
\begin{equation*}
  I(a,\tilde a)=\inf_{\substack{X\in \C(\R_+,\R^d)\,, T\ge
    0:\\\,X_0=a,\,X_T=\tilde a}}I_{a}(X)=
\inf_{\substack{X\in \C(\R_+,\R^d)\,:\,X_0=a,\,\\X_t\to \tilde a\,\text{ as }t\to\infty}}I_{a}(X)\,.
\end{equation*}
For diffusion processes,
the lefthand representation of $I(a,\tilde a)$  was used in Freidlin and
Wentzell \cite{wf2}.
\end{remark}

We now look at  moderate deviation setups. Let
\begin{multline*}
  X^{m,n}_t=x^{m,n}+
\int_0^tb(X^{m,n}_s)ds+\frac{1}{\sqrt{m}}\int_0^t\sigma(X^{m,n}_s)dW^{n}_s\\
+ \frac{1}{\sqrt{nm}}\int_0^t\int_{\mathbb G}f(X^{m,n}_{s-},u)\bl(\mu^{n}(ds,du)-n\, ds\,
\nu(du)\br)\,, 
\end{multline*}
where $n\to\infty$\,, $m\to\infty$\,,  $n/m\to\infty$ and 
''the primitive data''  are as above.  

Let us assume that one of the following 
conditions holds:
 \begin{trivlist}
\item[$(\text{P})$]
for some $\delta>0$\,,
\begin{equation*}
  \int_{\mathbb G} |f(y,u)|^{2+\delta}\,\nu(du)
<\infty\,,\; y\in\R^d\,,
\end{equation*}
and $\ln n/m\to\infty$\,;
\item[$(\text{SE})$] for some $\beta\in (0,1]$ and 
$\alpha>0$\,,
\begin{equation*}
  \int_{\mathbb G}e^{\alpha|f(y,u)|^\beta}\,\nu(du)<\infty\,,\; y\in\R^d\,,
\end{equation*}
and $n^\beta/m^{2-\beta}\to\infty$\,.
\end{trivlist} Let also $x^{m,n}\to x$\,. Then, the LDP  holds for $X^{m,n}$ in
$\D(\R_+,\R^d)$ for rate
$m$ with deviation function
\begin{equation*}
 \hat I_x(X)=\frac{1}{2}\,\int_0^\infty (\dot X_t-b(X_t))\cdot
\bl(\sigma(X_t)\sigma(X_t)^\ast+\int_{\mathbb G}f(X_t,u)f(X_t,u)^\ast\,\nu(du)\br)^{-1}
(\dot X_t-b(X_t))\,dt\,,
\end{equation*}
provided $X_t$ is absolutely continuous and $X_0=x$\,, and
$I_x(X)=\infty$\,, otherwise. The proof is done by applying Theorem
5.4.4 on p.423 in Puhalskii \cite{Puh01} (with $\alpha_\phi=n$ and $\beta_\phi=\sqrt{nm})$\,. The other hypotheses of Theorem
\ref{the:vf} are checked as for $X^n$\,, e.g.,
$\limsup_{n,m\to\infty,\,n/m\to\infty}\limsup_{t\to\infty}(\mathbf
E\abs{X^{m,n}_t}^{2m})^{1/m}<\infty$\,. 
It follows that the analogue  of Theorem \ref{the:jump_diff_ld} holds
for the $X^{m,n}$ and rate $m$\,, 
with the stationary distribution of $X^{m,n}$ and $\hat I_x$
substituted for $\pi^n$ and $I_x$\,, respectively.

As another example, we consider stationary 
moderate deviations around the equilibria
of \eqref{eq:52}. Let $\tilde X^{n,m}_t=\sqrt{n/m}\,(X^n_t-x)$\,, where
$b(x)=0$  and  $n/m\to\infty$\,.
Suppose that one of the conditions (P)
 or  (SE) holds.
Let $\sqrt{n/m}\,(x^{n}-x)
\to\tilde x$ and $b(y)$ be continuously differentiable.
Then, by Theorem 5.4.4 on p.423 in Puhalskii \cite{Puh01}, the $\tilde
X^{n,m}$ obey the LDP for rate $m$ in $\D(\R_+,\R^d)$ with deviation
function 
\begin{equation*}
  \tilde I_{\tilde x}(X)=\frac{1}{2}\,\int_0^\infty (\dot X_t-
D b(x)X_t)\cdot
c(x)^{-1}
(\dot X_t-D b(x)X_t)\,dt\,,
\end{equation*}
provided $X_t$ is absolutely continuous and $X_0=\tilde x$\,, and
$\tilde I_{\tilde x}(X)=\infty$\,, otherwise, where $Db(y)$ stands for
the derivative of $b(y)$ and \begin{equation*}
  c(x)=\sigma( x)\sigma( x)^\ast
+\int_{\mathbb G}f( x,u)f( x,u)^\ast\,\nu(du)\,.
\end{equation*}
Let us assume 
that the matrix $Db( x)$ is stable. 
Then, the solutions of the equation
 $\dot X_t=Db(x)X_t$ converge to 0 so that the deviation function is
 given by a quasipotential.
Then, an analogue of Theorem
\ref{the:jump_diff_ld} for the stationary distributions of $\tilde X^{n,m}_t$ holds.

The quasipotential   can be evaluated explicitly.
Let us  look for $\inf\tilde I_0(X)$ over $X\in \C(\R_+,\R^d)$ and 
$T\in\R_+$ such that $X_{T}=r\in \R^d$\,.
The Hamiltonian associated with the Lagrangian $L(y,y')=(y'-
D b( x)y)\cdot c(x)^{-1}
(y'-D b( x)y)/2$ 
 is given by
\begin{equation*}
  H(y,p)=\sup_{y'\in \R^d}(p\cdot y'-L(y,y'))
=\frac{1}{2}\,p\cdot c(x) p+
p\cdot Db( x)y\,.
\end{equation*}
The Hamilton equations 
 $\dot p_t^{(T)}=-H_y(X^{(T)}_{t},p^{(T)}_t)$ and
 $\dot{X}^{(T)}_t=
H_p(X^{(T)}_t,p^{(T)}_t)$
along with the terminal condition $X^{(T)}_T=r$ 
yield
 $p^{(T)}_t=e^{-Db( x)^\ast t}C^{(T)}$
and 
\begin{equation*}
  X^{(T)}_t
=
e^{Db( x)t}\int_0^te^{-Db( x)s}c(x)e^{-Db( x)^\ast s}\,ds\,C^{(T)}\,, 
\end{equation*}
where
\begin{equation*}
  C^{(T)}=\bl(e^{Db( x)T}\int_0^{T}e^{-Db( x)t}c(x)e^{-Db( x)^\ast t}\,dt\br)^{-1}
r\,.
\end{equation*}
 Therefore, for the optimal trajectory,
\begin{equation*}
  \tilde I_{0}(X^{(T)})
=
\frac{1}{2}\,\int_0^{T}p^{(T)}_t\cdot c(x)p^{(T)}_t\,dt\\
\\
=\frac{1}{2}\,r^\ast
e^{-Db^\ast( x)T}\bl(\int_0^{T}e^{-Db( x)t}c(x)e^{-Db( x)^\ast t}\,dt\br)^{-1}
e^{-Db( x)T}r
\,.
\end{equation*}
The infimum over ${T}>0$ is
\begin{equation*}
  \frac{1}{2}\,r^\ast
\bl(\int_0^\infty
e^{Db( x)t}c(x)e^{Db( x)^\ast t}\,dt\br)^{-1}
r
\,,
\end{equation*}
 which is the deviation function for the stationary distributions of
the $\tilde X^{n,m}_t$\,.
One can also see that, for $t\ge0$\,,  as $T\to\infty$\,,
\begin{equation*}
  X^{(T)}_{T-t}\to \bl(\int_0^\infty e^{Db(x)s}c(x)e^{Db(x)^\ast
    s}ds\br)e^{Db(x)^\ast t}
\bl(\int_0^\infty e^{Db(x)s}c(x)e^{Db(x)^\ast s}ds\br)^{-1}r\,.
\end{equation*}
\appendix 

\section{On continuous LD convergence}\label{cont_conv}
The next  lemma   concerns the requirement in
condition {\bf 2.1} that $x^n$ should belong to the support of $P^n$
and is used in the proof of the main result.
\begin{lemma}
  \label{le:cont}
Let $S$ be a metric space and $P_n$ be a sequence of probability
measures
on the Borel $\sigma$-algebra of $S$\,. Suppose that the sequence $P_n$
LD converges at rate $n$ to deviability $\Pi$ on $S$\,.
 Let $h_n$  be 
$\R_+$--valued bounded Borel functions on $S$ and let $h$ be an
$\R_+$--valued function on $S$\,.
If 
$\limsup_{n\to\infty}h_n(y_n)\le h(y)$  for every sequence $y_n$ of
elements of $S$ and every $y$ such that $y_n\to y$\,,
 $y_n$ belongs to the support of $P_n$\,, and $\Pi(y)>0$\,, then
 \begin{equation*}
   \limsup_{n\to\infty}\bl(\int_Sh_n(y)^n\,P_n(dy)\br)^{1/n}\le
\sup_{y\in S}h(y)\Pi(y)\,.
 \end{equation*}
If, rather, $\liminf_{n\to\infty}h_n(y_n)\ge h(y)$\,, then
 \begin{equation*}
   \liminf_{n\to\infty}\bl(\int_Sh_n(y)^n\,P_n(dy)\br)^{1/n}\ge
\sup_{y\in S}h(y)\Pi(y)\,.
 \end{equation*}
\end{lemma}
\begin{proof}
Let $\mathcal{U}_y$ represent the collection of open neighbourhoods of
$y\in S$\,, let $F_n$ denote the support of $P_n$\,, let
\begin{align*}
  \overline h_n(y)=\inf_{U\in\mathcal{U}_y}
\sup_{m\ge n}\sup_{y'\in U\cap
  F_m}
h_m(y')
\intertext{and let}
\overline h(y)=\inf_n\overline h_n(y)\,.
\end{align*}
The functions $\overline h_n$ and $\overline h$ are upper
semicontinuous, $\overline h_n\downarrow \overline h$\,, as
$n\to\infty$\,, and $\overline h\le h$ $\Pi$--a.e.
Then, on choosing $n_0$ such that
$\sup_{y\in S}\overline h_n(y)\Pi(y)\le
\sup_{y\in S}\overline h(y)\Pi(y)+\epsilon$\,, for $n\ge n_0$\,, and noting that 
$h_n(y)\le\overline h_{n_0}(y)$\,, for $n\ge n_0$ and $y\in F_n$\,,
with the use of Theorem 3.1.3 on p.254 in Puhalskii \cite{Puh01},
\begin{multline*}
  \limsup_{n\to\infty}\bl(\int_S h_n(y)^n\,P_n(dy)\br)^{1/n}
=\limsup_{n\to\infty}\bl(\int_{F_n} h_n(y)^n\,P_n(dy)\br)^{1/n}
\le\limsup_{n\to\infty}\bl(\int_{F_n}\overline
  h_{n_0}(y)^n\,P_n(dy)\br)^{1/n}\\\le
  \limsup_{n\to\infty}\bl(\int_S\overline
  h_{n_0}(y)^n\,P_n(dy)\br)^{1/n}\le
\sup_{y\in S}\overline h_{n_0}(y)\Pi(y)
\le \sup_{y\in S}\overline h(y)\Pi(y)+\epsilon\le
\sup_{y\in S} h(y)\Pi(y)+\epsilon\,.
\end{multline*}
The second part is proved similarly,  on introducing
$  \underline h_n(y)=\sup_{U\in\mathcal{U}_y}
\inf_{m\ge n}\inf_{y'\in U\cap
  F_m}
h_m(y')$ and
$\underline h(y)=\sup_n\underline h_n(y)\,.$
\end{proof}

\def\cprime{$'$} \def\cprime{$'$} \def\cprime{$'$} \def\cprime{$'$}
  \def\cprime{$'$} \def\polhk#1{\setbox0=\hbox{#1}{\ooalign{\hidewidth
  \lower1.5ex\hbox{`}\hidewidth\crcr\unhbox0}}} \def\cprime{$'$}
  \def\cprime{$'$} \def\cprime{$'$} \def\cprime{$'$} \def\cprime{$'$}
  \def\cprime{$'$}

\end{document}